  \theoremstyle{plain}
    \newtheorem{thm}{Theorem}[section]
    \newtheorem{prop}[thm]{Proposition}
   \newtheorem{lemma}[thm]{Lemma}
    \newtheorem{subsec}[thm]{}
\theoremstyle{definition}
    \newtheorem{defn}[thm]{Definition}
        \newtheorem{remark}[thm]{Remark}
    \newtheorem{exam}[thm]{Example}
\theoremstyle{remark}
\title{}
\author{}
\date{}
\begin{document}
\title{Gerstenhaber algebra structure on the cohomology of a hom-associative algebra}

\author{Apurba Das}

\curraddr{Department of Mathematics and Statistics, Indian Institute of Technology, Kanpur 208016, Uttar Pradesh, India}
\email{apurbadas348@gmail.com}

\subjclass[2010]{16E40, 17A30.}
\keywords{Gerstenhaber algebra, Hom-algebra, Hochschild cohomology, cup-product.}

\begin{abstract}
A hom-associative algebra is an algebra whose associativity is twisted by an algebra homomorphism. In this paper, we define a cup product on the cohomology of a hom-associative algebra. A direct verification shows that this cup product
together with the degree $-1$ graded Lie bracket (which controls the deformation of the hom-associative algebra structure) on the cohomology makes it a Gerstenhaber algebra.
\end{abstract}

\noindent

\thispagestyle{empty}

\maketitle

\vspace{0.2cm}
\section{Introduction}
It is a classical fact that the Hochschild cohomology $H^\bullet(A, A)$ of an associative algebra
$A$ carries a Gerstenhaber algebra structure \cite{gers}. A (right) Gerstenhaber algebra is a graded commutative, associative algebra $(\mathcal{A} = \oplus_{i \in \mathbb{Z}} \mathcal{A}^i, \cup)$ together with a degree $-1$ graded Lie bracket $[-,-]$ compatible with the product $\cup$ in the sense of the following Leibniz rule
\begin{align}\label{g-compatibility}
[a \cup b, c] = [a, c] \cup b + (-1)^{(|c|-1) |a|} a \cup [b, c].
\end{align}
For an associative algebra $(A, \mu)$, the Hochschild cochain groups $C^\bullet (A, A)$ carry a (cup) product $\cup : C^m (A, A) \otimes C^n(A, A) \rightarrow C^{m+n} (A, A)$ defined by
\begin{align}\label{std-cup}
(f \cup g) (a_1, \ldots, a_{m+n}) = \mu ( f (a_1, \ldots, a_m) , g(a_{m+1}, \ldots, a_{m+n}) ).
\end{align}
It turns out that the Hochschild coboundary operator $\delta$ is a graded derivation with respect to the cup product. Hence, it induces a cup product $\cup$ on the Hochschild cohomology $H^\bullet (A, A)$. Moreover, the cochain groups $C^\bullet (A, A)$ carry a degree $-1$ graded Lie bracket compatible with the Hochschild cobundary $\delta$ \cite{gers}. Therefore, it gives rise to a degree $-1$ graded Lie bracket on $H^\bullet (A, A)$. The cup product and the degree $-1$ graded Lie bracket on the Hochschild cohomology $H^\bullet (A, A)$ are compatible in the sense of (\ref{g-compatibility}) to make it into a Gerstenhaber algebra.

Recently, hom type algebras have been studied by many authors. In these algebras, the identities defining the structures are twisted by linear maps.
The notion of hom-Lie algebras was introduced by Hartwig, Larsson and Silvestrov \cite{hls}.
 Hom-Lie algebras appeared in examples of $q$-deformations of the Witt and Virasoro algebras. Other type of algebras (e.g. associative, Leibniz, Poisson, Hopf,...) twisted by linear maps have also been studied. See \cite{makh-sil, makh-sil2} (and references there in) for more details. Our main object of interest in this paper is the notion of hom-associative algebra introduced by Makhlouf and Silvestrov \cite{makh-sil}. A hom-associative algebra is an algebra $(A, \mu)$ whose associativity is twisted by a linear homomorphism $\alpha : A \rightarrow A$. It is called multiplicative if $\alpha$ is an algebra map. The commutator bracket of a hom-associative algebra gives rise to a hom-Lie algebra. More recently, a step towards the noncommutative geometry of hom-associative algebras have been initiated in \cite{hss}.

In \cite{amm-ej-makh, makh-sil2} the authors studied the formal one-parameter deformation of multiplicative hom-associative algebras and introduce a Hochschild type cohomology theory for them. Given a multiplicative hom-associative algebra $(A, \mu, \alpha)$, its $n$-th cochain group $C^n_\alpha (A, A)$ consists of multilinear maps $f : A^{\otimes n} \rightarrow A$ which satisfies $ \alpha \circ f = f \circ \alpha^{\otimes n} $ and the coboundary operator $\delta_\alpha : C^n_\alpha (A, A) \rightarrow C^{n+1}_\alpha (A, A)$ is similar to the Hochschild coboundary but suitably twisted by $\alpha$. We denote the cohomology by $H^\bullet_\alpha (A, A)$. Cohomology with coefficients in a suitable bimodule has been studied in \cite{arfa-fraj-makh}. We show that the second Hochschild cohomology group with coefficients in a bimodule are in one-to-one correspondence with abelian extensions (cf. Theorem \ref{thm-abelian-ext}). In our knowledge, this result has not yet been mentioned in any article.

Next, we turn our attention to the Hochschild cohomology of a multiplicative hom-associative algebra $(A, \mu, \alpha)$ with coefficients in itself. Inspired from the classical case, the authors in \cite{amm-ej-makh} introduce a degree $-1$ graded Lie bracket $[-,-]_\alpha$ on the cochain groups $C^\bullet_\alpha (A, A)$. However, they did not study further on the structure of the cohomology.
In this paper, we show that the bracket $[-,-]_\alpha$ on $C^\bullet_\alpha (A, A)$ induces a degree $-1$ graded Lie bracket on the cohomology $H^\bullet_\alpha (A, A)$. Next, we introduce a cup product $\cup_\alpha$ on the cochain groups $C^\bullet_\alpha (A, A)$. Although, the algebra $A$ is not associative, the cup product $\cup_\alpha$ turns out to be associative on $C^\bullet_\alpha (A, A)$. Moreover, the coboundary operator $\delta_\alpha$ is a graded derivation with respect to $\cup_\alpha$. Therefore, it induces an associative cup product (also denoted by $\cup_\alpha$) on the cohomology $H^\bullet_\alpha (A, A)$. The cup product on the cochain level may not be graded commutative but it is so at the level of cohomology. Finally, the cup product $\cup_\alpha$ and the degree $-1$ graded Lie bracket $[-,-]_\alpha$ on the cohomology $H^\bullet_\alpha (A, A)$ satisfy the Leibniz rule (\ref{g-compatibility}) to become a Gerstenhaber algebra (cf. Theorem \ref{final-thm-old}).

Throughout this paper all vector spaces are over a field $\mathbb{K}$ of characteristic $0$.

\section{Hom-associative algebras and graded pre-Lie algebras}
The aim of this section is to recall some preliminaries on multiplicative hom-associative algebras, its Hochschild type cohomology \cite{amm-ej-makh, arfa-fraj-makh, makh-sil, makh-sil2} and graded pre-Lie algebras \cite{gers}.
\begin{defn}
A hom-associative algebra is a triple $(A, \mu, \alpha)$ consisting of a vector space $A$ together with a bilinear map
$\mu : A \times A \rightarrow A$ and a linear map $\alpha : A \rightarrow A$ satisfying
\begin{align}\label{hom-ass-cond}
\mu ( \alpha (a) , \mu ( b , c) ) = \mu ( \mu (a , b) , \alpha (c)), ~~ \text{ for all } a, b, c \in A.
 \end{align}
A hom-associative algebra $(A, \mu, \alpha)$ is called multiplicative if
$$ \alpha ( \mu( a, b)) = \mu (\alpha (a) , \alpha (b)), ~~ \text{ for all } a, b \in A.$$
\end{defn}

In the rest of the paper, by a hom-associative algebra, we shall always mean a multiplicative hom-associative algebra. When $\alpha =$ identity, in any case, one gets the definition of a classical associative algebra.

\begin{exam}
Let $(A, \mu)$ be an associative algebra and $\alpha : A \rightarrow A$ be an algebra homomorphism. Then the triple $(A, \alpha \circ \mu, \alpha)$ is a hom-associative algebra, called `induced by composition'.
\end{exam}

\begin{exam}
Let $A = (A, \mu, \alpha)$ and $A' = (A' , \mu', \alpha')$ be two hom-associative algebras. Then $A \otimes A' = (A \otimes A', \mu \otimes \mu', \alpha \otimes \alpha')$ is a hom-associative algebra, called the tensor product of $A$ and $A'$.
\end{exam}

\begin{exam}
Let $ (A, \mu, \alpha)$ be a hom-associative algebra. Then $(M_n (A), M_n (\mu), M_n (\alpha))$ is also a hom-associative algebra whose multiplication $M_n (\mu)$ is given by the matrix multiplication induced by $\mu$ and the structure map $M_n (\alpha)$ is the map $\alpha$ on each entry of the matrix.
\end{exam}

\begin{exam}
Let $A$ be a two dimensional vector space with basis $\{e_1, e_2\}$. Define a multiplication $\mu : A \times A \rightarrow A$ by
$$ \mu (e_i, e_j) = \begin{cases} e_1 & \mbox{ if } (i,j) = (1,1)\\
e_2 & \mbox{ if } (i,j) \neq (1,1).
\end{cases} $$
A linear map $\alpha : A \rightarrow A$ is defined by $\alpha (e_1) = e_1 - e_2$ and $\alpha (e_2) = 0$. Then the triple $(A, \mu, \alpha)$ forms a hom-associative algebra.
\end{exam}

Next, we recall the definition of Hochschild type cohomology for hom-associative algebras \cite{amm-ej-makh}. We first recall bimodules over hom-associative algebras.

\begin{defn} Let $A= (A, \mu, \alpha)$ be a hom-associative algebra. A left $A$-module
 consists of a triple $M = (M, \mu_l, \alpha_M)$ of a vector space $M$, a bilinear map $\mu_l : A \otimes M \rightarrow M$ and a linear  map $\alpha_M : M \rightarrow M$ satisfying
\begin{align*}
\alpha_M ( \mu_l(a , m)) = \mu_l (\alpha (a) , \alpha_M (m)) \text{ and } ~~~ \mu_l (\alpha (a) , \mu_l (b , m)) = \mu_l ( \mu (a, b) , \alpha_M (m)).
\end{align*}
\end{defn}
A right $A$-module $M = (M, \mu_r, \alpha_M)$ can be defined in a similar way. An $A$-bimodule is a quadruple $M = (M, \mu_l, \mu_r, \alpha_M)$ so that $(M, \mu_l, \alpha_M)$ is a left $A$-module and $(M, \mu_r, \alpha_M)$ is a right $A$-module and further satisfy
\begin{align*}
\mu_l (\alpha(a) , \mu_r (m , b)) = \mu_r (\mu_l (a , m) , \alpha (b)).
\end{align*}

\begin{remark}\label{self-bimod}
Any hom-associative algebra $A$ is a bimodule over itself with $\mu_l = \mu_r = \mu$ and $\alpha_M = \alpha$. When there is an $A$-bimodule $M = (M, \mu_l, \mu_r, \alpha_M)$, we denote both $\mu_l$ and $\mu_r$ by $\mu$ if there is no confusion and simply say $M = (M, \mu, \alpha_M)$ is an $A$-bimodule. Therefore, by $\mu$ we denote the product in $A$ as well as the left and the right action of $A$ on $M$. It would be clear from the entries of $\mu$.
\end{remark}

If $M = (M, \mu, \alpha_M)$ is an $A$-bimodule, one can define a hom-associative algebra structure on $M \oplus A$. The product and the structure map are given by
\begin{align*}
\overline{\mu} ((m, a), (n, b)) =~ ( \mu(m , b) + \mu (a , n),~ \mu (a, b)) ~~~\text{~~~ and ~~~}~~~ \overline{\alpha}((m, a)) = (\alpha_M (a), \alpha (a)).
\end{align*}
This is called the semi-direct product hom-associative algebra.

\medskip

Let $(A, \mu, \alpha)$ be a hom-associative algebra and $M$ be an $A$-bimodule. For each $n \geq 1$, we define a $\mathbb{K}$-vector space $C^n_{\alpha, \alpha_M} (A,M) $ consisting of all multilinear maps $f : A^{\otimes n} \rightarrow M$ satisfying $\alpha_M \circ f = f \circ \alpha^{\otimes n}$, that is,
$$ (\alpha_M \circ f) (a_1, \ldots, a_n) =  f ( \alpha(a_1), \ldots , \alpha (a_n )), ~~\text{ for all }a_i \in A .$$
Define $\delta_\alpha : C^n_{\alpha, \alpha_M} (A, M) \rightarrow C^{n+1}_{\alpha, \alpha_M} (A, M)$ by the following
\begin{align*}
 (\delta_\alpha f) (a_1, a_2, \ldots, a_{n+1}) =~& \mu \big(\alpha^{n-1}(a_1) , f(a_2, \ldots , a_{n+1}) \big) \\
~& + \sum_{i=1}^{n} (-1)^i f \big( \alpha(a_1), \ldots, \alpha (a_{i-1}), \mu (a_i , a_{i+1}), \alpha (a_{i+2}), \ldots, \alpha (a_{n+1}) \big) \\
~& + (-1)^{n+1} \mu (f (a_1, \ldots, a_n) , \alpha^{n-1} (a_{n+1})).
\end{align*}
It has been shown in \cite{amm-ej-makh} that $\delta^2_\alpha = 0$. Therefore, if $Z^n_{\alpha, \alpha_M} (A,M) = \{ f \in C^n_{\alpha, \alpha_M} (A,M)|~ \delta_\alpha (f) = 0 \}$ denote the space of $n$-cocycles and $B^n_{\alpha, \alpha_M} (A,M) = \{ \delta_\alpha g |~ g \in C^{n-1}_{\alpha, \alpha_M} (A,M) \}$ denote the space of $n$-coboundaries, we have $ B^n_{\alpha, \alpha_M} (A,M) \subseteq Z^n_{\alpha, \alpha_M} (A,M)$. The $n$-th cohomology group of the hom-associative algebra $(A, \mu , \alpha)$ with coefficients in $M$ is defined by
$$ H^n_{\alpha, \alpha_M} (A, M) := \frac{Z^n_{\alpha, \alpha_M} (A,M)}{B^n_{\alpha, \alpha_M} (A,M)  }, ~~ \text{ for } n \geq 2.$$ 

When $M = A$ with the $A$-bimodule structure given in Remark \ref{self-bimod}, we denote the corresponding cochain groups simply by $C^n_\alpha (A, A)$ and the cohomology by $H^n_\alpha (A, A),$ for $n \geq 2$.

\begin{remark}
(i) In general, the cohomology groups of hom-associative algebras does not make sense for $n= 0, 1$. In the case of associative algebra and bimodule over it (i.e. $\alpha = $ id, $\alpha_M =$ id), one can define the space of zero-th cochains by $M$ and the coboundary map by $(\delta m)(a) = \mu (a, m) - \mu (m, a)$, for $m \in M,~ a \in A$. Thus, the cohomology groups make sense for any $n \geq 0$ and one recovers the classical Hochschild cohomology of an associative algebra.

(ii) Let $(A, \mu, \alpha)$ be a hom-associative algebra. Then $\mu \in C^2_\alpha (A, A)$. It follows from the hom-associativity condition that  $\delta_\alpha (\mu) = 0$. Therefore, $\mu$ defines a $2$-cocycle. This is in fact a coboundary and is given by $\mu = \delta_\alpha (\text{id})$ where $\text{id} \in C^1_\alpha (A, A)$ is the identity map on $A$. 
\end{remark}

Next, we recall some details on graded pre-Lie algebras \cite{gers}.

\begin{defn}
A pre-Lie system $\{ V_m, \circ_i \}$ consists of a sequence of vector spaces $V_m, m \geq 0$, and for each $m , n \geq 0$ there exist linear maps $$ \circ_i : V_m \otimes V_n \rightarrow V_{m+n}, ~~ (f,g) \mapsto f \circ_i g ~~~ \qquad 0 \leq i \leq m$$
satisfying
$$ ( f \circ_i g) \circ_j h = \begin{cases} (f \circ_j h ) \circ_{i+p} g & \mbox{ if } \quad 0 \leq j \leq i-1\\
f \circ_i ( g \circ_{j-i} h ) & \mbox{ if } \quad i \leq j \leq n+1,
\end{cases} $$
for $f \in V_m,~ g \in V_n$ and $h \in V_p$.
\end{defn}

\begin{exam}
Let $A$ be a vector space and define $V_m$ to be the space of all multilinear maps from $A^{\otimes m+1}$ to $A$ for $m \geq 0$.  If $f \in V_m$ and $g \in V_n$ with $m, n \geq 0$, then for $0 \leq i \leq m$, define $f \circ_i g \in V_{m+n}$ by
$$(f \circ_i g ) (a_0, a_1, \ldots, a_{m+n}) = f \big(a_0, \ldots, a_{i-1}, g (a_i, a_{i+1}, \ldots, a_{i+n}), a_{i+n+1}, \ldots, a_{m+n} \big).$$
In the next section, we generalize this example to a vector space $A$ equipped with a linear map $\alpha$. 
\end{exam}

Let $\{ V_m , \circ_i\}$ be a pre-Lie system. Then for any $m, n \geq 0$, there is a new linear map $$\circ : V_m \otimes V_n \rightarrow V_{m+n},~~~ (f,g) \mapsto f \circ g$$ defined by
$$ f \circ g = \sum_{i=0}^{m} (-1)^{ni}  f \circ_i g , \text{ for } f \in V_m, ~  g \in V_n. $$

We will need the following useful result \cite[Theorem 2]{gers}.
\begin{thm}\label{thm2-gers}
Let $\{ V_m, \circ_i \}$ be a pre-Lie system and $f \in V_m,~ g \in V_n$ and $h \in V_p$, respectively. Then
\begin{itemize}
	\item[(i)] $(f \circ g) \circ h - f \circ (g \circ h) = \sum_{i, j} (-1)^{ni + pj} (f \circ_i g) \circ_j h$, where the summation is indexed over those $i$ and $j$ with either $0 \leq j \leq i-1$ or $n+i +1 \leq j \leq m+n$,
	\item[(ii)]  $  (f \circ g) \circ h - f \circ (g \circ h)  =  (-1)^{np} [(f \circ h) \circ g  -  f \circ (h \circ g) ].$
\end{itemize}
\end{thm}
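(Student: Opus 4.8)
The plan is to prove both parts by expanding every $\circ$ into the elementary operations $\circ_i$ via the definition $f \circ g = \sum_{i=0}^{m}(-1)^{ni} f \circ_i g$, and then to sort the resulting double sums according to the positional regions governed by the pre-Lie identity. For part (i), expanding the two sides gives $(f \circ g) \circ h = \sum_{i=0}^{m}\sum_{j=0}^{m+n}(-1)^{ni+pj}(f\circ_i g)\circ_j h$ and $f\circ(g\circ h) = \sum_{i=0}^{m}\sum_{k=0}^{n}(-1)^{(n+p)i+pk} f\circ_i(g\circ_k h)$, where the signs follow from applying the formula for $\circ$ twice and collecting exponents.

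I would then split the $j$-summation in the first expansion into the lower range $0\le j\le i-1$, the middle band $i\le j\le n+i$, and the upper range $n+i+1\le j\le m+n$. Only the middle band invokes the second case of the pre-Lie identity: substituting $(f\circ_i g)\circ_j h = f\circ_i(g\circ_{j-i}h)$ and reindexing by $k=j-i$ converts the sign $(-1)^{ni+pj}$ into $(-1)^{(n+p)i+pk}$, so the middle band is term-by-term equal to $f\circ(g\circ h)$ and cancels against the subtracted term. What remains is exactly the claimed sum over the two outer ranges, which proves (i).

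For part (ii), the idea is to apply (i) also to the triple $(f,h,g)$ and to match the surviving outer-range terms of the two identities by means of the remaining regions of the pre-Lie relation. On the lower range I would use the first case $(f\circ_i g)\circ_j h=(f\circ_j h)\circ_{i+p}g$, and on the upper range the reflected relation $(f\circ_i g)\circ_j h=(f\circ_{j-n}h)\circ_i g$, which I would first derive from the first case by interchanging the roles of $g$ and $h$. In each situation I set $i'$ to be the slot occupied by $h$ and $j'$ the slot occupied by $g$; a direct computation then shows the sign becomes $(-1)^{np}(-1)^{pi'+nj'}$, and the change of variables $i=j'-p,\ j=i'$ carries the lower range bijectively onto $p+i'+1\le j'\le m+p$, while $i=j',\ j=i'+n$ carries the upper range onto $0\le j'\le i'-1$. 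These are precisely the two outer ranges of (i) for $(f,h,g)$, so the total equals $(-1)^{np}\big[(f\circ h)\circ g - f\circ(h\circ g)\big]$.

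The routine expansion together with the single middle-band cancellation makes (i) essentially mechanical; the genuine work lies in part (ii). The main obstacle is keeping the two index substitutions and the three sign exponents mutually consistent, and in particular verifying that each outer range for $(f,g,h)$ maps exactly onto an outer range for $(f,h,g)$ while producing the one uniform extra factor $(-1)^{np}$. I would isolate this as two explicit reindexing computations so that the sign accounting is transparent and no boundary term is miscounted.
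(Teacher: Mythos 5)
Your proof is correct: the double expansion of both sides, the cancellation of the middle band $i \le j \le i+n$ via the second case of the pre-Lie relation (with $k=j-i$ turning $(-1)^{ni+pj}$ into $(-1)^{(n+p)i+pk}$), and the two reindexings $(i,j)\mapsto (i',j')=(j,\,i+p)$ on the lower range and $(i,j)\mapsto (j-n,\,i)$ on the upper range — each producing the uniform factor $(-1)^{np}$ and carrying the outer ranges for $(f,g,h)$ bijectively onto those for $(f,h,g)$ — all check out, including the derivation of the reflected relation $(f\circ_i g)\circ_j h=(f\circ_{j-n}h)\circ_i g$ from the first case with $g$ and $h$ interchanged. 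The paper gives no proof of this statement (it quotes it as Theorem 2 of Gerstenhaber's 1963 paper), and your argument is precisely Gerstenhaber's original one; note only that you have tacitly corrected a typo in the paper's definition of a pre-Lie system, whose middle range should read $i \le j \le i+n$ rather than $i \le j \le n+1$, as your use of the identity (and the outer ranges in part (i)) requires.
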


\begin{defn}
A graded pre-Lie algebra $\{ V_m, \circ \}$ consists of vector spaces $V_m$ together with linear maps $\circ : V_m \otimes V_n \rightarrow V_{m+n}$ satisfying
$$  (f \circ g) \circ h - f \circ (g \circ h)  =  (-1)^{np} [(f \circ h) \circ g  -  f \circ (h \circ g) ], \text{ for } f \in V_m,~ g \in V_n,~ h \in V_p.$$
\end{defn}

It is proved in \cite{gers} that the graded commutator of a graded pre-Lie algebra defines a graded Lie algebra structure. More precisely, if 
 $\{ V_m , \circ \}$ is a graded pre-Lie algebra, then the bracket $[-,-]$ defined by
 $$ [f, g] = f \circ g - (-1)^{mn} g \circ f, ~~ \text{ for } f \in V_m,~ g \in V_n$$
 defines a graded Lie algebra structure on $\oplus {V_m}.$
 
 \begin{remark}\label{pre-sys-pre-lie}
 	It follows from Theorem \ref{thm2-gers}(ii) that if $\{ V_m, \circ_i \}$ is a pre-Lie system then $\{ V_m, \circ\}$ forms a graded pre-Lie algebra. Therefore, by applying the graded commutator we get a graded Lie algebra structure on $\oplus V_m$.
 \end{remark}

\section{Abelian extensions} In this section, we show that the second Hochschild cohomology $H^2_{\alpha, \alpha_M} (A, M)$ of a hom-associative algebra $A$ with coefficients in a bimodule $M$ can be interpreted as equivalence classes of abelian extensions of $A$ by $M$.

Let $A = (A, \mu, \alpha)$ be a hom-associative algebra and $M= (M, \alpha_M)$ be a vector space equipped with a linear map $\alpha_M : M \rightarrow M$. Note that $M$ can be considered as a hom-associative algebra with trivial multiplication.

\begin{defn}
An abelian extension of $A$ by $M$ is an exact sequence of hom-associative algebras
\[
\xymatrix{
0 \ar[r] &  (M, 0, \alpha_M) \ar[r]^{i} & (E, \mu_E, \alpha_E) \ar[r]^{j} & (A, \mu, \alpha) \ar[r] \ar@<+4pt>[l]^{s} & 0
}
\]
together with a $\mathbb{K}$-splitting (given by $s$) which satisfies 
\begin{align}\label{s-property}
\alpha_E \circ s = s \circ \alpha.
\end{align}
\end{defn}

An abelian extension induces an $A$-bimodule structure on $(M, \alpha_M)$ via the action map $\mu (a , m) := \mu_E (s(a), i(m))$ and $\mu (m , a) := \mu_E (i(m), s(a))$, for $a \in A,~ m \in M.$ One can easily verify that this action in independent of the choice of $s$. 

\begin{remark}
Let $(E, \alpha_E)$ and $(A, \alpha)$ be two vector spaces equipped with linear maps $\alpha_E : E \rightarrow E$ and $\alpha : A \rightarrow A$. Suppose $j : E \rightarrow A$ is a linear surjective map which commutes with respective structure maps. Then there might not be a section $s : A \rightarrow E$ of $j$ which commute with respective structure maps. Take $E = \langle x , y \rangle$ with $\alpha_E (x) = y,~ \alpha_E (y) = 0$ and $A = \langle a \rangle$ with $\alpha (a) = 0.$ Take $j (x) = a$ and $j (y) = 0$. Let $s$ be a section for $j$ commuting with respective structure maps. For $s (a) = \lambda x + \nu y$, we have $a = (j \circ s) (a) = \lambda a$, which implies that $\lambda = 1$. Finally,
\begin{align*}
0 = (s \circ \alpha) (a) = (\alpha_E \circ s)(a) = \alpha_E (x + \nu y) = y
\end{align*}
which is a contradiction.
\end{remark}

Two abelian extensions are said to be equivalent if there is a morphism $\phi : E \rightarrow E'$ between hom-associative algebras making the following diagram commute
\[
\xymatrix{
0 \ar[r] &  (M, 0, \alpha_M) \ar[r]^{i} \ar@{=}[d] & (E, \mu_E, \alpha_E) \ar[d]^{\phi} \ar[r]^{j} & (A, \mu, \alpha) \ar[r] \ar@{=}[d] \ar@<+4pt>[l]^{s} & 0 \\
0 \ar[r] &  (M, 0, \alpha_M) \ar[r]^{i'} & (E', \mu'_E, \alpha'_E) \ar[r]^{j'} & (A, \mu, \alpha) \ar[r] \ar@<+4pt>[l]^{s'} & 0 .
}
\]
Note that two extensions with same $i$ and $j$ but different $s$ are always equivalent.

Suppose $M$ is a given $A$-bimodule. We denote by $\mathcal{E}xt (A, M)$ the equivalence classes of abelian extensions of $A$ by $M$ for which the induced $A$-bimodule structure on $M$ is the pescribed one.

The next result is inspired from the classical case.
\begin{thm}\label{thm-abelian-ext}
$H^2_{\alpha, \alpha_M} (A, M) \cong \mathcal{E}xt (A, M).$
\end{thm}

\begin{proof}
Given a $2$-cocycle $f \in C^2_{\alpha, \alpha_M} (A, M)$, we consider the $\mathbb{K}$-module $E = M \oplus A$ with following structure maps
\begin{align*}
{\mu}_E ((m, a), (n, b)) =~& ( \mu(m , b) + \mu( a , n) + f (a, b),~ \mu (a, b)),\\
{\alpha}_E((m, a)) =~& (\alpha_M (a), \alpha (a)).
\end{align*}
(Observe that when $f =0$ this is the semi-direct product.)
Using the fact that $f$ is a $2$-cocycle, it is easy to verify that $(E, \mu_E, \alpha_E)$ is a hom-associative algebra. Moreover, $0 \rightarrow M \rightarrow E \rightarrow A \rightarrow 0$ defines an abelian extension with the obvious splitting. Let $(E' = M \oplus A, \mu_E', \alpha_E)$ be the corresponding hom-associative algebra associated to the cohomologous $2$-cocycle $f - \delta_{\alpha} (g)$, for some $g \in C^1_{\alpha, \alpha_M} (A, M)$. The equivalence between abelian extensions $E$ and $E'$ is given by $E \rightarrow E'$, $(m, a) \mapsto (m + g (a), a)$. Therefore, the map $H^2_{\alpha, \alpha_M} (A, M) \rightarrow \mathcal{E}xt (A, M) $ is well defined.

Conversely, given an extension 
$0 \rightarrow M \xrightarrow{i} E \xrightarrow{j} A \rightarrow 0$ with splitting $s$, we may consider $E = M \oplus A$ and $s$ is the map $s (a) = (0, a).$ With respect to the above splitting, the maps $i$ and $j$ are the obvious ones. Moreover, the property (\ref{s-property}) then implies that $\alpha_E = (\alpha_M, \alpha)$. Since $j \circ \mu_E ((0, a), (0, b)) = \mu (a, b)$ as $j$ is an algebra map, we have $\mu_E ((0, a), (0, b)) = (f (a, b), \mu (a, b))$, for some $f \in C^2_{\alpha, \alpha_M} (A, M).$ The hom-associativity of $\mu_E$ then implies that $f$ is a $2$-cocycle. Similarly, one can observe that any two equivalent extensions are related by a map $E = M \oplus A \xrightarrow{\phi} M \oplus A = E'$, $(m, a) \mapsto (m + g(a), a)$ for some $g \in C^1_{\alpha, \alpha_M} (A, M)$. Since $\phi$ is an algebra morphism, we have
\begin{align*}
\phi \circ \mu_E ((0, a), (0, b)) = \mu'_{E} (\phi (0, a) , \phi (0, b))
\end{align*}
which implies that $f' (a, b) = f (a, b) - (\delta_{\alpha} g)(a, b)$. Here $f'$ is the $2$-cocycle induced from the extension $E'$. This shows that the map $\mathcal{E}xt (A, M) \rightarrow H^2_{\alpha, \alpha_M} (A, M)$ is well defined. Moreover, these two maps are inverses to each other.
\end{proof}

In the following sections, we will be interested in the Hochschild cohomology of $A$ with coefficients in itself.

\section{Shifted graded Lie bracket}\label{sec-3}

Let $A$ be a vector space and $\alpha : A \rightarrow A$ be a linear map. For each $m \geq 0$, define $V_m^\alpha$ to be the space of all multilinear maps $f : A^{\otimes m+1} \rightarrow A$ satisfying 
$$ (\alpha \circ f) (a_1, \ldots, a_{m+1}) =  f ( \alpha(a_1), \ldots , \alpha (a_{m+1} )), ~~\text{ for all }a_i \in A .$$

It is proved in \cite[Theorem 3.3]{amm-ej-makh} that the graded vector space $\oplus_{m \geq 0} V_m$ carries a graded Lie algebra structure. Here, we will give an alternative proof of the same fact. Our construction of the same graded Lie algebra structure is based on the pre-Lie system.

If $f \in V_m^\alpha$ and $g \in V_n^\alpha$ with $m, n \geq 0$, then for $0 \leq i \leq m$, define $f \circ_i g \in V_{m+n}^\alpha$ by
$$(f \circ_i g ) (a_0, a_1, \ldots, a_{m+n}) = f \big( \alpha^n a_0, \ldots, \alpha^n a_{i-1}, g (a_i, a_{i+1}, \ldots, a_{i+n}), \alpha^n a_{i+n+1}, \ldots, \alpha^n a_{m+n} \big).$$

In the rest of the paper, by $\circ_i$ operation, we shall always mean the operation defined above (involving $\alpha$).

\begin{prop}\label{new-sys}
	With the above notations $\{ V_m^\alpha , \circ_i\}$ forms a pre-Lie system.
\end{prop}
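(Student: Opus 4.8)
The plan is to establish the two defining relations of a pre-Lie system by direct computation, following the template of the classical (untwisted) computation of Gerstenhaber but keeping careful track of the powers of $\alpha$ that the operation $\circ_i$ inserts. Two separate points need attention: that the operation is well defined, i.e.\ that $f \circ_i g$ actually lies in $V_{m+n}^\alpha$ whenever $f \in V_m^\alpha$ and $g \in V_n^\alpha$; and the associativity-type relation governing $(f \circ_i g) \circ_j h$.

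First I would check well-definedness. Evaluating $(f \circ_i g)(\alpha a_0, \ldots, \alpha a_{m+n})$, every outer slot of $f$ receives an argument of the form $\alpha^n \alpha a_k = \alpha\, \alpha^n a_k$, while the distinguished $i$-th slot becomes $g(\alpha a_i, \ldots, \alpha a_{i+n}) = \alpha\, g(a_i, \ldots, a_{i+n})$ since $g \in V_n^\alpha$. Thus each argument of $f$ is $\alpha$ applied to the corresponding argument of $(f \circ_i g)(a_0, \ldots, a_{m+n})$, and a single final use of $\alpha \circ f = f \circ \alpha^{\otimes(m+1)}$ factors this $\alpha$ out, giving $\alpha \circ (f \circ_i g) = (f \circ_i g) \circ \alpha^{\otimes(m+n+1)}$, as required.

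Next I would verify the pre-Lie identity in its two cases. In the case $j \geq i$, where $h$ is inserted into the slot occupied by $g$, expanding $(f \circ_i g) \circ_j h$ shows that the $i$-th argument of $f$ becomes $g$ evaluated on its arguments with $h$ substituted at position $j-i$, which is exactly $(g \circ_{j-i} h)$; at the same time each remaining slot of $f$ picks up the factor $\alpha^n \alpha^p = \alpha^{n+p}$, so the result is $f \circ_i (g \circ_{j-i} h)$. In the case $0 \leq j \leq i-1$, where $h$ is inserted to the left of $g$, I would expand both $(f \circ_i g) \circ_j h$ and $(f \circ_j h) \circ_{i+p} g$ and compare them slot by slot: on both sides the untouched arguments of $f$ carry $\alpha^{n+p}$, the arguments fed into $h$ carry $\alpha^n$ (using $\alpha^n \circ h = h \circ \alpha^{\otimes(p+1)}$), and those fed into $g$ carry $\alpha^p$, while the index shift from $i$ to $i+p$ on the right exactly absorbs the $p$ extra variables that $h$ introduces on the left.

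The only genuine obstacle is this $\alpha$-bookkeeping: one must confirm that the power of $\alpha$ attached to each variable agrees on the two sides of every identity. This rests on two elementary observations, namely that powers of $\alpha$ add, $\alpha^n \alpha^p = \alpha^{n+p}$, and that every element of $\oplus_m V_m^\alpha$ intertwines $\alpha$ with its tensor powers; together these allow $\alpha$-powers to be moved freely in and out of $f$, $g$ and $h$. Once the slots are aligned, the underlying combinatorics of the index ranges is identical to the classical pre-Lie system recalled above, and the identity follows.
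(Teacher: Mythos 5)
Your proposal is correct and takes essentially the same route as the paper: a direct, case-by-case verification of the two pre-Lie system relations by expanding both sides slot by slot, using that powers of $\alpha$ add and that every element of $V_\bullet^\alpha$ intertwines $\alpha$ with its tensor powers (the paper's proof of Proposition \ref{new-sys} is exactly this computation). The only difference is that you also check explicitly that $f \circ_i g$ lands in $V_{m+n}^\alpha$, which the paper asserts without proof; this is a welcome addition rather than a different method.
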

\begin{proof}
	Let $f \in V_m^\alpha,~ g \in V_n^\alpha$ and $h \in V_p^\alpha$.
	For $0 \leq j \leq i-1$ we have
	\begin{align*}
&~((f \circ_j h) \circ_{i+p} g ) (a_0, a_1, \ldots, a_{m+n+p})\\
=&~ (f \circ_j h) \big(  \alpha^n a_0, \ldots, \alpha^n a_{i+p-1} , g \big( a_{i+p}, \ldots, a_{i+p+n}  \big), \alpha^n a_{i+p+n+1}, \ldots, \alpha^n a_{m+n+p}  \big) \\
=&~ f \big( \alpha^{n+p} a_0, \ldots, \alpha^{n+p} a_{j-1}, h \big( \alpha^n a_j, \ldots, \alpha^n a_{j+p}  \big), \ldots, \alpha^{n+p} a_{i+p-1}, \alpha^p  g \big( a_{i+p}, \ldots, a_{i+p+n}  \big),   \ldots, \alpha^{n+p} a_{m+n+p}    \big) \\
=&~ (f \circ_i g) \big( \alpha^p a_0, \ldots, \alpha^p a_{j-1}, h \big(a_j, \ldots, a_{j+p} \big), \alpha^p a_{j + p +1}, \ldots, \alpha^{p} a_{m+n+p}     \big) \\
=&~ (  (f \circ_i g) \circ_j h) (a_0, a_1, \ldots, a_{m+n+p}).
\end{align*}
Similarly, for $1 \leq j \leq n+1$ we have
\begin{align*}
&~(f \circ_i (g \circ_{j-i} h)) (a_0, a_1, \ldots, a_{m+n+p}) \\
=&~ f \big( \alpha^{n+p} a_0, \ldots, \alpha^{n+p} a_{i-1},  (g \circ_{j-i} h) (a_i, \ldots, a_{i+n+p}), \alpha^{n+p} a_{i+n+p+1}, \ldots, \alpha^{n+p} a_{m+n+p} \big) \\
=&~ f \big( \alpha^{n+p} a_0, \ldots, \alpha^{n+p} a_{i-1}, g \big( \alpha^p a_i, \ldots, \alpha^p a_{j-1}, h (a_j, \ldots, a_{j+p}), \ldots, \alpha^p a_{i+n+p} \big), \ldots, \alpha^{n+p} a_{m+n+p} \big) \\
=&~ (f \circ_i g) \big( \alpha^p a_0, \ldots, \alpha^p a_{i-1}, \alpha^p a_i, \ldots, \alpha^p a_{j-1}, h \big(  a_j, \ldots, a_{j+p}    \big), \alpha^p a_{j+p+1} , \ldots, \alpha^p a_{m+n+p}  \big) \\
=&~ ((f \circ_i g) \circ_j h ) (a_0, a_1, \ldots, a_{m+n+p}).
\end{align*}
\end{proof}

Therefore, it follows from Remark \ref{pre-sys-pre-lie} that given a vector space $A$ together with a linear map $\alpha : A \rightarrow A$, the pair $\{ V_m^\alpha , \circ \}$ forms a graded pre-Lie algebra. Hence, the graded commutator 
defines a graded Lie algebra structure on $\oplus V_m^\alpha$. 
More precisely, the graded Lie bracket $[-,-] : V^\alpha_m \times V^\alpha_n \rightarrow V^\alpha_{m+n}$ is given by
$$[f,g] = f \circ g - (-1)^{mn} g \circ f ,~~ \text{ for } f \in V^\alpha_m, g \in V^\alpha_n,$$
where $f \circ g = \sum_{i=0}^{m} (-1)^{ni} f \circ_i g$.
This gives an alternative and less computational proof of \cite[Theorem 3.3]{amm-ej-makh}.

Let $(A, \mu, \alpha)$ be a hom-associative algebra and consider the graded Lie algebra structure on $ \oplus V^\alpha_m$ defined above. Since $ C^{m}_\alpha (A, A) = V^\alpha_{m-1}$ for $m \geq 1$, the Hochschild type cochain groups $C^\bullet_\alpha (A, A)$ carries a shifted graded Lie algebra structure. Explicitly, the shifted graded Lie bracket
$$ [-,-]_\alpha : C^m_\alpha (A, A) \times C^n_\alpha (A, A) \rightarrow C^{m+n-1}_\alpha (A, A), ~~ m, n \geq 1$$ is given by
$$ [f, g]_\alpha = f \circ g - (-1)^{(m-1)(n-1)} g \circ f,~~~ \text{ for } f \in C^m_\alpha (A, A),~ g \in C^n_\alpha (A, A),$$
where $(f \circ g) (a_1, \ldots, a_{m+n-1}) = 
\sum_{i=1}^{m} (-1)^{(n-1)(i-1)} f (\alpha^{n-1} a_1, \ldots, g (a_i, \ldots, a_{i+n-1}), \ldots, \alpha^{n-1} a_{m+n-1} ).$

For any $f \in C^n_\alpha (A, A) = V_{n-1}^\alpha$, we observe that
\begin{align*}
&~( f \circ \mu - (-1)^{n-1} \mu \circ f) (a_1, \ldots, a_{n+1})\\
=&~ \sum_{i=1}^{n} (-1)^{i-1} f \big( \alpha(a_1), \ldots, \mu (a_i , a_{i+1}), \ldots, \alpha (a_{n+1}) \big) \\
&~ - (-1)^{n-1} \mu ( f (a_1, \ldots, a_n), \alpha^{n-1} a_{n+1} ) - (-1)^{n-1} (-1)^{n-1} \mu (\alpha^{n-1}a_1 , f (a_2, \ldots, a_{n+1})) \\
=&~  - \sum_{i=1}^{n} (-1)^{i} f \big( \alpha(a_1), \ldots, \mu (a_i , a_{i+1}), \ldots, \alpha (a_{n+1}) \big) \\
&~- (-1)^{n-1}  \mu (f (a_1, \ldots, a_n) , \alpha^{n-1} a_{n+1} )- \mu ( \alpha^{n-1} a_1 , f (a_2, \ldots, a_{n+1}) ) = - (\delta_\alpha f) (a_1, \ldots, a_{n+1}).
\end{align*}
Hence,
\begin{align}\label{diff-formula-brckt}
\delta_\alpha f = - ( f \circ \mu - (-1)^{n-1} \mu \circ f) = - [f , \mu]_\alpha = (-1)^{n-1} [\mu, f]_\alpha, ~\text{ for } f \in C^n_\alpha (A, A).
\end{align}

\begin{prop}\label{brack-leibniz}
	For any $f \in C^m_\alpha (A, A)$ and $g \in C^n_\alpha (A, A)$, $m, n \geq 1$, we have
$$\delta_\alpha  [ f, g]_\alpha = (-1)^{n+1} [\delta_\alpha f , g]_\alpha + [f, \delta_\alpha g]_\alpha.$$
\end{prop}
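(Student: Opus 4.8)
The plan is to reduce the compatibility (Leibniz) identity to a purely formal statement about the graded Lie bracket, using the key formula \eqref{diff-formula-brckt} which expresses the coboundary $\delta_\alpha$ as bracketing with $\mu$. Specifically, since $\delta_\alpha f = (-1)^{m-1}[\mu, f]_\alpha$ for $f \in C^m_\alpha(A,A)$, the entire proposition should follow from the graded Jacobi identity for $[-,-]_\alpha$ together with the fact that $\mu$ is a $2$-cocycle, i.e. $\delta_\alpha \mu = 0$, equivalently $[\mu,\mu]_\alpha = 0$.

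First I would recall that $\{V^\alpha_m, \circ\}$ is a graded pre-Lie algebra (established via Proposition \ref{new-sys} and Remark \ref{pre-sys-pre-lie}), so the shifted bracket $[-,-]_\alpha$ on $C^\bullet_\alpha(A,A)$ is a genuine graded Lie bracket of degree $-1$. The degree conventions matter here: under the shift $C^m_\alpha(A,A) = V^\alpha_{m-1}$, an element of $C^m_\alpha$ has Lie-degree $m-1$. I would write the graded Jacobi identity in the form appropriate to these shifted degrees, namely that the graded Jacobiator of $\mu$, $f$, $g$ vanishes. Applying \eqref{diff-formula-brckt} three times, I would rewrite $\delta_\alpha[f,g]_\alpha$ as $\pm[\mu, [f,g]_\alpha]_\alpha$, expand the inner bracket via Jacobi into a sum of $[[\mu,f]_\alpha, g]_\alpha$ and $[f, [\mu,g]_\alpha]_\alpha$ terms, and then re-express each of these using \eqref{diff-formula-brckt} as $\delta_\alpha f$ and $\delta_\alpha g$ respectively. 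Matching the resulting signs against the claimed formula $(-1)^{n+1}[\delta_\alpha f, g]_\alpha + [f, \delta_\alpha g]_\alpha$ is then a bookkeeping exercise.

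The main obstacle I anticipate is sign bookkeeping. The degree shift between the cochain grading $m$ and the Lie grading $m-1$ means every sign $(-1)^{mn}$ in the unshifted Jacobi identity becomes $(-1)^{(m-1)(n-1)}$, and one must track the extra shift carefully when $\mu$ (degree $2$, Lie-degree $1$) enters each bracket. It is easy to be off by a sign, so I would fix one explicit convention for the graded Jacobi identity at the outset—for homogeneous $x, y, z$ of Lie-degrees $p, q, r$, the identity $(-1)^{pr}[[x,y],z] + (-1)^{qp}[[y,z],x] + (-1)^{rq}[[z,x],y] = 0$—and substitute $x = \mu$ (Lie-degree $1$), $y = f$ (Lie-degree $m-1$), $z = g$ (Lie-degree $n-1$). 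With $[\mu,\mu]_\alpha = 0$ ensuring there is no anomalous term, the three cyclic terms should organize precisely into the two expressions on the right-hand side.

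Alternatively, if the direct Jacobi substitution produces unwieldy signs, I would prove the identity by a brute-force cochain computation: expand both sides on arguments $(a_1, \ldots, a_{m+n})$ using the explicit formulas for $\delta_\alpha$, $\circ$, and $[-,-]_\alpha$, and verify term-by-term cancellation, leaning on the pre-Lie relations of Proposition \ref{new-sys} and the $\alpha$-equivariance $\alpha \circ f = f \circ \alpha^{\otimes m}$ to absorb the $\alpha$-powers. This is more tedious but conceptually transparent, and it avoids any subtlety in the shifted-degree Jacobi convention. I expect the Jacobi-identity route to be shorter and cleaner, so that would be my primary approach, with the explicit computation held in reserve as a check.
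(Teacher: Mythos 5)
Your primary route is exactly the paper's proof: it uses \eqref{diff-formula-brckt} to rewrite $\delta_\alpha [f,g]_\alpha$ as $(-1)^{m+n}[\mu, [f,g]_\alpha]_\alpha$, expands via the shifted graded Jacobi identity (with $\mu$ of Lie-degree $1$, $f$ of Lie-degree $m-1$, $g$ of Lie-degree $n-1$), and converts the resulting brackets back into $\delta_\alpha f$ and $\delta_\alpha g$, yielding precisely $(-1)^{n+1}[\delta_\alpha f, g]_\alpha + [f, \delta_\alpha g]_\alpha$. One minor simplification: the condition $[\mu,\mu]_\alpha = 0$ you invoke is not needed, since the Jacobi expansion of $[\mu,[f,g]_\alpha]_\alpha$ produces no $[\mu,\mu]_\alpha$ term.
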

\begin{proof}
	It follows from (\ref{diff-formula-brckt}) that
	\begin{align*}
	\delta_\alpha [f, g]_\alpha =&~ (-1)^{m+n} [  \mu, [f, g]_\alpha]_\alpha \\
	=&~ (-1)^{m + n} [ [\mu, f]_\alpha, g]_\alpha + (-1)^{n+1} [ f, [\mu, g]_\alpha  ]_\alpha  \qquad \text{(shifted graded Jacobi identity)}\\
	=&~ (-1)^{n+1} [ \delta_\alpha f, g ]_\alpha + [f, \delta_\alpha g]_\alpha.
	\end{align*}
\end{proof}

\begin{remark}\label{rem-bracket}
It follows from Proposition \ref{brack-leibniz} that if $f \in Z^m_\alpha (A, A)$ and $g \in Z^n_\alpha (A, A)$, $m, n \geq 1$, then $[f , g ]_\alpha \in Z^{m+n-1}_\alpha (A, A).$ Moreover, if either $f \in B^m_\alpha (A, A)$ or $g \in B^n_\alpha (A, A)$ then $[f , g ]_\alpha \in B^{m+n-1}_\alpha (A, A).$ Therefore, the degree $-1$ graded Lie bracket $[-,-]_\alpha$ on $C^\bullet_\alpha (A,A)$ induces a degree $-1$ graded Lie bracket on $H^\bullet_\alpha (A, A)$. We denote the induced bracket on  $H^\bullet_\alpha (A, A)$ by the same symbol $[-,-]_\alpha$.
\end{remark}

\section{Cup product}
The aim of this section is to define a cup product on the cohomology of a hom-associative algebra. Let $(A, \mu, \alpha)$ be a hom-associative algebra. For $f \in C^m_\alpha (A,A)$ and $g \in C^n_\alpha (A, A)$ with $m , n \geq 1$, define $f \cup_\alpha g \in C^{m+n}_\alpha (A,A)$ by
\begin{align*}
 (f \cup_\alpha g) (a_1, \ldots, a_{m+n}) =&~ \mu ( f (\alpha^{n-1} a_1, \ldots, \alpha^{n-1} a_m) , g (\alpha^{m-1} a_{m+1}, \ldots, \alpha^{m-1} a_{m+n}) ) \\
 =&~ \mu (  \alpha^{n-1} f (a_1, \ldots, a_m), \alpha^{m-1} g (a_{m+1}, \ldots, a_{m+n})).
\end{align*}
(Note the powers of $\alpha$ in the above expression.) In the case of associative algebra (i.e. $\alpha =$ id), one gets the standard cup product (\ref{std-cup}) defined on the classical Hochschild cochains. It follows from the hom-associativity condition (\ref{hom-ass-cond}) that the cup product $\cup_\alpha$ defined on $C^\bullet_\alpha (A,A)$ is associative. In other words,
\begin{align}\label{cup-ass}
f \cup_\alpha (g \cup_\alpha h) = (f \cup_\alpha g) \cup_\alpha h,
\end{align}
for any $f \in C^m_\alpha (A, A) ~, g \in C^n_\alpha (A, A)$ and $h \in C^p_\alpha (A, A)$.

\begin{remark}\label{cup-circ}
Let $f \in C^m_\alpha (A, A)$ and $g \in C^n_\alpha (A, A)$ with $m, n \geq 1$. Consider them as elements of $V^\alpha_{m-1}$ and $V^\alpha_{n-1}$, respectively. We observe that the cup product $\cup_\alpha$ and the $\circ_i$ products defined in the beginning of Section \ref{sec-3} are related in the following way
\begin{align*}
 f \cup_\alpha g = ((\mu \circ_0 f) \circ_{m} g ) ~~ \text{ and } g \cup_\alpha f = ((\mu \circ_1 f) \circ_0 g).
 \end{align*}
\end{remark}

In the next result, we prove that the differential $\delta_\alpha$ on $C^\bullet_\alpha (A, A)$ satisfies the graded Leibniz rule with respect to the cup product.
\begin{prop}\label{cup-leibniz}
For $f \in C^m_\alpha (A,A)$ and $g \in C^n_\alpha (A, A)$ with $m , n \geq 1$, we have
$$ \delta_\alpha (f \cup_\alpha g) =  \delta_\alpha f \cup_\alpha g + (-1)^m f \cup_\alpha \delta_\alpha g.$$
\end{prop}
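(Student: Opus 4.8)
The plan is to prove the identity by direct computation, expanding both sides evaluated on a tuple $(a_1, \ldots, a_{m+n+1})$ and matching the resulting terms. The organizing observation is that when one writes out $\delta_\alpha(f \cup_\alpha g)$ and looks at the defining sum of $\delta_\alpha$ over $i = 1, \ldots, m+n$, the inserted product $\mu(a_i, a_{i+1})$ always falls \emph{entirely} inside the $f$-block (for $1 \leq i \leq m$) or entirely inside the $g$-block (for $m+1 \leq i \leq m+n$); there is no genuinely straddling summand. Thus the $m+n$ interior terms should split cleanly into the contributions expected from $\delta_\alpha f \cup_\alpha g$ and from $f \cup_\alpha \delta_\alpha g$, while the two ``end'' terms of $\delta_\alpha$ together with the leftover boundary terms of the right-hand side must be reconciled using hom-associativity.

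First I would expand $\delta_\alpha f \cup_\alpha g$, substituting the three pieces of $\delta_\alpha f \in C^{m+1}_\alpha(A,A)$ and using multiplicativity $\alpha \circ \mu = \mu(\alpha \otimes \alpha)$ together with the cochain condition $\alpha \circ f = f \circ \alpha^{\otimes m}$ to move the ambient $\alpha^{n-1}$ past $\mu$ and $f$. This yields a leading term coming from $\mu(\alpha^{m-1}a_1, f(\ldots))$, an $f$-block sum over $i = 1, \ldots, m$, and a trailing boundary term $(-1)^{m+1}\mu(f(\ldots), \alpha^{m-1}a_{m+1})$ cupped with $g$. Symmetrically I would expand $(-1)^m f \cup_\alpha \delta_\alpha g$, getting a leading boundary term from $\mu(\alpha^{n-1}a_{m+1}, g(\ldots))$, a $g$-block sum, and a trailing term from $(-1)^{n+1}\mu(g(\ldots), \alpha^{n-1}a_{m+n+1})$.

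Then I would match term by term. The interior sums align directly: the $i = 1, \ldots, m$ terms of $\delta_\alpha(f \cup_\alpha g)$ equal the $f$-block sum of $\delta_\alpha f \cup_\alpha g$ (after rewriting $\alpha^{m-1}g(\alpha a_{m+2}, \ldots) = \alpha^{m}g(a_{m+2}, \ldots)$ by the cochain condition), and the $i = m+1, \ldots, m+n$ terms match the $g$-block sum of $f \cup_\alpha \delta_\alpha g$. The leading term of $\delta_\alpha(f \cup_\alpha g)$ matches the leading term of $\delta_\alpha f \cup_\alpha g$, and its trailing term matches the trailing term of $f \cup_\alpha \delta_\alpha g$; each of these two identifications is exactly one rebracketing of a nested product, supplied by the hom-associativity condition (\ref{hom-ass-cond}). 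The only family with no partner on the left is the pair of boundary terms, the trailing one of $\delta_\alpha f \cup_\alpha g$ and the leading one of $f \cup_\alpha \delta_\alpha g$, carrying signs $(-1)^{m+1}$ and $(-1)^m$ respectively; applying (\ref{hom-ass-cond}) once to the first rewrites both as $\mu(\alpha^{n}f(a_1,\ldots,a_m), \mu(\alpha^{m+n-2}a_{m+1}, \alpha^{m-1}g(a_{m+2},\ldots,a_{m+n+1})))$, so they cancel.

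The step I expect to be the main obstacle is the bookkeeping of the powers of $\alpha$: at each rebracketing one must check that the nested product already has precisely the shape $\mu(\mu(a,b), \alpha(c))$ (or $\mu(\alpha(a), \mu(b,c))$) demanded by (\ref{hom-ass-cond}), with the correct single power of $\alpha$ isolated on the outer argument. This is exactly what forces the powers $\alpha^{n-1}$ and $\alpha^{m-1}$ in the definition of $\cup_\alpha$: for instance in the boundary cancellation the outer factor $\alpha^{m}g(\ldots) = \alpha(\alpha^{m-1}g(\ldots))$ is what makes hom-associativity applicable. Once these power counts are verified in the leading term, the trailing term, and the two boundary terms, the remaining matching is routine, and collecting signs completes the proof.
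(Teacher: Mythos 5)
Your proposal is correct and is essentially the paper's own argument: the paper likewise expands $\delta_\alpha(f\cup_\alpha g)$ on $(a_1,\ldots,a_{m+n+1})$, splits the interior sum cleanly into an $f$-block ($1\le i\le m$) and a $g$-block ($m+1\le i\le m+n$), and uses hom-associativity (\ref{hom-ass-cond}) exactly where you do — to rebracket the leading and trailing terms and to cancel the two unmatched cross-boundary terms, whose common rewritten form $\mu\big(\alpha^{n}f(a_1,\ldots,a_m),\,\mu(\alpha^{m+n-2}a_{m+1},\,\alpha^{m-1}g(a_{m+2},\ldots,a_{m+n+1}))\big)$ you identify correctly, signs included. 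Your power-of-$\alpha$ bookkeeping also matches the paper's computation, so there is nothing to fix.
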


\begin{proof}
	For any $a_1, a_2 , \ldots, a_{m+n+1} \in A$, we have
\begin{align*}
&~(\delta_\alpha (f \cup_\alpha g)) (a_1, a_2, \ldots, a_{m+n+1}) \\
=&~ \mu (\alpha^{m+n-1} (a_1) , (f \cup_\alpha g)(a_2, \ldots, a_{m+n+1}) ) \\
&~ + \sum_{i=1}^{m+n} (-1)^i (f \cup_\alpha g) ( \alpha (a_1), \ldots, \alpha(a_{i-1}), \mu (a_i , a_{i+1}), \alpha (a_{i+2}), \ldots, \alpha (a_{m+n+1})  ) \\
&~ + (-1)^{m+n+1} \mu ((f \cup_\alpha g) (a_1, \ldots, a_{m+n}) , \alpha^{m+n-1} (a_{m+n+1}) ) \\
=&~ \mu \big(\alpha^{m+n-1} (a_1) , ~ \mu (  f(\alpha^{n-1} a_2, \ldots, \alpha^{n-1} a_{m+1}) , g (\alpha^{m-1} a_{m+2}, \ldots, \alpha^{m-1} a_{m+n+1})  ) \big) \\
&~ + \sum_{i=1}^{m} (-1)^i \mu ( f ( \alpha^n a_1, \ldots, \alpha^{n-1} \mu (a_i , a_{i+1}), \ldots, \alpha^n a_{m+1} ) ,~ g (\alpha^m a_{m+2}, \ldots, \alpha^m a_{m+n+1})) \\
&~ + \sum_{i=1}^{n} (-1)^{m+i} \mu ( f ( \alpha^n a_1, \ldots, \alpha^n a_m   ) ,~  g ( \alpha^m a_{m+1}, \ldots, \alpha^{m-1} \mu (a_{m+i} , a_{m+i+1}), \ldots, \alpha^{m} (a_{m+n+1})   )) \\
&~ + (-1)^{m+n+1} \mu \big( ~\mu \big( f ( \alpha^{n-1} a_1, \ldots, \alpha^{n-1} a_m  )  ,  g ( \alpha^{m-1} a_{m+1} , \ldots, \alpha^{m-1} a_{m+n} ) \big) ,~ \alpha^{m+n-1} (a_{m+n+1}) \big) \\
=&~ \mu \big(\bigg[  \mu (~ \alpha^{m+n-2} (a_1) , f (\alpha^{n-1} a_2, \ldots, \alpha^{n-1} a_{m+1}) ) \\
&~+  \sum_{i=1}^{m} (-1)^i  f \big(  \alpha (\alpha^{n-1} a_1) , \ldots, \mu (\alpha^{n-1} a_i , \alpha^{n-1} a_{i+1}), \ldots, \alpha (\alpha^{n-1} a_{m+1}) \big) \\
&~+ (-1)^{m+1} \mu (~ f (\alpha^{n-1} a_1, \ldots, \alpha^{n-1} a_m) ,~ \alpha^{m+n-2} a_{m+1})   \bigg] , g (\alpha^m a_{m+2}, \ldots, \alpha^m a_{m+n+1}) \big) \\
&~ + (-1)^m \mu \big(~ f (  \alpha^n a_1, \ldots, \alpha^n a_m  ) ,  \bigg[   \mu (~ \alpha^{m+n-2} a_{m+1} ,~ g (a^{m-1} a_{m+2}, \ldots, \alpha^{m-1} a_{m+n+1})) \\
&~ + \sum_{i=1}^{n} (-1)^i g \big(  \alpha (\alpha^{m-1} a_{m+1}), \ldots, \mu (\alpha^{m-1} a_{m+i} , \alpha^{m-1} a_{m+i+1}), \ldots, \alpha ( \alpha^{m-1} a_{m+n+1})  \big)   \\
&~ + (-1)^{n+1}  \mu \big(~ g ( \alpha^{m-1} a_{m+1}, \ldots, \alpha^{m-1} a_{m+n}  ) , ~ \alpha^{m+n-2} (a_{m+n+1}) \big) \bigg] \big)  \quad (\text{using hom-associativity})\\
=&~ \mu \big( (\delta_\alpha f) (\alpha^{n-1} a_1, \ldots, \alpha^{n-1} a_{m+1}) ,~ g (\alpha^m a_{m+2}, \ldots, \alpha^m a_{m+n+1}) \big) \\
&~ + (-1)^m  \mu \big( f ( \alpha^n a_1, \ldots, \alpha^n a_m ) , ~(\delta_\alpha g) ( \alpha^{m-1} a_{m+1}, \ldots, \alpha^{m-1} a_{m+n+1} ) \big) \\
=&~ \big[  \delta_\alpha f \cup_\alpha g  + (-1)^m f \cup_\alpha \delta_\alpha g \big] (a_1, a_2, \ldots, a_{m+n+1}).
\end{align*}
Hence the result follows.
\end{proof}

\begin{remark}\label{rem-cup}
It follows from Proposition \ref{cup-leibniz} that if $f \in Z^m_\alpha (A, A)$ and $g \in Z^n_\alpha (A, A)$, $m, n \geq 1$, then $f \cup_\alpha g \in Z^{m+n}_\alpha (A, A).$ Moreover, if either $f \in B^m_\alpha (A, A)$ or $g \in B^n_\alpha (A, A)$ then $f \cup_\alpha g \in B^{m+n}_\alpha (A, A).$ Therefore, the cup product $\cup_\alpha$ on $C^\bullet_\alpha (A,A)$ induces an associative (cup) product on $H^\bullet_\alpha (A, A)$. We denote the induced cup product on  $H^\bullet_\alpha (A, A)$ by the same symbol $\cup_\alpha$. It follows from condition (\ref{cup-ass}) that $\cup_\alpha$ is associative on $H^\bullet_\alpha (A, A)$.
\end{remark}

The cup product $\cup_\alpha$ on the cochain level is not graded commutative.
In the next proposition, we will prove that the cup product $\cup_\alpha$ is graded commutative at the cohomology level. The proof is similar to the classical case \cite{gers}.

\begin{prop}\label{some-lemma-1}
Let $(A, \mu , \alpha)$ be a hom-associative algebra. For any $f \in C^m_\alpha (A, A)$ and $g \in C^n_\alpha (A, A)$, we have
$$ f \circ \delta_\alpha g - \delta_\alpha (f \circ g) + (-1)^{n-1} \delta_\alpha f \circ g = (-1)^{n-1} \big( g \cup_\alpha f - (-1)^{mn} f \cup_\alpha g \big).$$
\end{prop}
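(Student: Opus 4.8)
The plan is to reduce everything to the pre-Lie system structure together with the expression of the differential as a $\circ$-commutator with $\mu$. Write $d = \delta_\alpha$ and recall from (\ref{diff-formula-brckt}) that for any $h \in C^k_\alpha(A,A)$ one has $dh = -\,h \circ \mu + (-1)^{k-1}\mu \circ h$. First I would substitute this expression for each of the three differentials occurring on the left-hand side, namely for $\delta_\alpha g$ (with $k=n$), for $\delta_\alpha f$ (with $k=m$), and for $\delta_\alpha(f \circ g)$, noting that $f \circ g \in C^{m+n-1}_\alpha(A,A)$ so that the relevant sign is $(-1)^{m+n-2}=(-1)^{m+n}$. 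After distributing $f \circ (-)$ and $(-) \circ g$ over the resulting expressions, the left-hand side becomes a sum of six terms, each of the shape $(a \circ b) \circ c$ or $a \circ (b \circ c)$ built from $f$, $g$, and $\mu$.

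Next I would collect these six terms into three associators $A(a,b,c) := (a \circ b)\circ c - a \circ (b \circ c)$, which gives
$$ \text{LHS} = A(f,g,\mu) - (-1)^{n-1} A(f,\mu,g) + (-1)^{m+n} A(\mu,f,g). $$
The decisive simplification is that the first two associators cancel. Since $\{V_m^\alpha, \circ\}$ is a graded pre-Lie algebra, the identity in Theorem \ref{thm2-gers}(ii), applied to the triple $f,g,\mu$ whose inner two entries $g$ and $\mu$ have degrees $n-1$ and $1$, yields $A(f,g,\mu) = (-1)^{n-1} A(f,\mu,g)$, so those two terms annihilate one another. This leaves the single surviving term $\text{LHS} = (-1)^{m+n} A(\mu,f,g)$.

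Finally I would evaluate $A(\mu,f,g)$ explicitly via Theorem \ref{thm2-gers}(i). Because $\mu \in V_1^\alpha$, the index $i$ ranges only over $\{0,1\}$, and the constraint on $j$ forces exactly two surviving summands, $(\mu \circ_0 f)\circ_{m} g$ and $(\mu \circ_1 f)\circ_0 g$, carrying the signs $(-1)^{(n-1)m}$ and $(-1)^{m-1}$ respectively. Invoking Remark \ref{cup-circ}, which identifies these two expressions with $f \cup_\alpha g$ and $g \cup_\alpha f$, converts the associator directly into cup products; a short sign computation then rewrites $(-1)^{m+n}A(\mu,f,g)$ as $(-1)^{n-1}\big(g \cup_\alpha f - (-1)^{mn} f \cup_\alpha g\big)$, which is the claimed right-hand side.

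The only genuine obstacle is sign and index bookkeeping. The $\alpha$-twisting, which one might fear complicates matters, is in fact entirely absorbed into the $\circ_i$ operations and into the identities of Remark \ref{cup-circ}; since all of these already hold at the level of the pre-Lie system $\{V_m^\alpha, \circ_i\}$, the argument is formally identical to Gerstenhaber's classical computation, with $\alpha$ never appearing explicitly once one works abstractly with $\circ$, $\circ_i$, and $\cup_\alpha$. I expect the sign verification in the final step to be the most error-prone part.
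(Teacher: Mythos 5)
Your proposal is correct and takes essentially the same route as the paper's proof: substitute $\delta_\alpha h = -h\circ\mu + (-1)^{k-1}\mu\circ h$ from (\ref{diff-formula-brckt}) into all three differentials, cancel the associators $A(f,g,\mu)$ and $(-1)^{n-1}A(f,\mu,g)$ via Theorem \ref{thm2-gers}(ii), and evaluate the surviving term $(-1)^{m+n}A(\mu,f,g)$ via Theorem \ref{thm2-gers}(i) together with Remark \ref{cup-circ}. All your signs and index ranges check out (in particular $(-1)^{m+n}(-1)^{(n-1)m}=(-1)^{mn+n}$ and $(-1)^{m+n}(-1)^{m-1}=(-1)^{n-1}$, matching the stated right-hand side); the only cosmetic difference is that you group the six terms into named associators before invoking the pre-Lie identities, where the paper substitutes the identity of Theorem \ref{thm2-gers}(ii) directly into the expanded sum.
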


\begin{proof}
	By using (\ref{diff-formula-brckt}) we get
\begin{align}\label{grad-comm}
&~f \circ \delta_\alpha g - \delta_\alpha ( f \circ g) + (-1)^{n-1} \delta_\alpha f \circ g \nonumber \\
=&~ (-1)^{n-1} f \circ ( \mu \circ g) - f \circ (g \circ \mu) - (-1)^{m+n -2} \mu \circ ( f \circ g) + (f \circ g) \circ \mu \nonumber \\
&~ + (-1)^{n-1} (-1)^{m-1} (\mu \circ f) \circ g - (-1)^{n-1} (f \circ \mu) \circ g .
\end{align}
Since $\{ V^\alpha_m , \circ_i \}$ is a pre-Lie system and $f \in V^\alpha_{m-1}, ~ g \in V^\alpha_{n-1}, ~\mu \in V^\alpha_1$, by Theorem \ref{thm2-gers}(ii) we get
$$ (f \circ g) \circ \mu - f \circ (g \circ \mu ) = (-1)^{n-1} (f \circ \mu) \circ g - (-1)^{n-1} f \circ (\mu \circ g).$$
Substituting the above relation in (\ref{grad-comm}) we get
\begin{align*}
 &~f \circ \delta_\alpha g - \delta_\alpha ( f \circ g) + (-1)^{n-1} \delta_\alpha f \circ g \\
 =&~ (-1)^{m+n} [  (\mu \circ f) \circ g - \mu \circ ( f \circ g)    ] \\
 =&~ (-1)^{m+n} [ (-1)^{m (n-1)}  (\mu \circ_0 f) \circ_{m} g  + (-1)^{m-1} (\mu \circ_1 f) \circ_0 g  ] ~~\quad (\text{by Theorem \ref{thm2-gers} (i)})\\
 =&~ (-1)^{m+n} [ (-1)^{m (n-1)} f \cup_\alpha g + (-1)^{m-1} g \cup_\alpha f   ] ~~ \quad \text{(by Remark \ref{cup-circ})}\\
 =&~ (-1)^{mn + n} f \cup_\alpha g + (-1)^{n-1} g \cup_\alpha f  = (-1)^{n-1} [ g \cup_\alpha f - (-1)^{mn} f \cup_\alpha g ].
\end{align*}
\end{proof}

\begin{remark}\label{rem-cup-comm}
It follows from Proposition \ref{some-lemma-1} that if $f \in Z^m_\alpha (A, A)$ and $g \in Z^n_\alpha (A, A)$ then
$$    (-1)^n [ g \cup_\alpha f - (-1)^{mn} f \cup_\alpha g] =  \delta_\alpha (f \circ g).$$
Hence, at the level of cohomology, the cup product is graded commutative.
\end{remark}

\section{Leibniz rule}

Let $(A, \mu, \alpha)$ be a hom-associative algebra. In this section, we prove that the degree $-1$ graded Lie bracket $[-,-]_\alpha$ and the cup product $\cup_\alpha$ on the Hochschild type cohomology $H^\bullet_\alpha (A, A)$ satisfies the Leibniz rule of a Gerstenhaber algebra. The proof is similar to the classical case \cite{gers} involving $\alpha$.

For simplicity we will use the following notations. The multiplication $\mu : A \times A \rightarrow A,~ (a,b) \mapsto \mu (a,b)$ is denoted by the dot $\cdot$, therefore, $a \cdot b = \mu (a, b)$. Let $a_1, a_2, \ldots \in A$ be arbitrary. For any $i \leq j$ and $\theta \geq 0$, we will write $a_{ij}^\theta$ for $(\alpha^\theta a_i, \ldots, \alpha^\theta a_j)$ and $a^\theta_i$ for $\alpha^\theta a_i$. Sometimes we also omit the arguments from an expression of the form
$$ f (\alpha^\theta a_\lambda, \alpha^\theta a_{\lambda + 1}, \ldots, \alpha^\theta a_{\lambda + m -1}) ~~~ \text{  or  } ~~~  g(\alpha^\theta a_\lambda, \alpha^\theta a_{\lambda +1}, \ldots, \alpha^\theta a_{\lambda + n -1})$$
and simply write them as $f^\theta$ or $g^\theta$. It is easy to write down the correct arguments as it depend only on the first index.

Let $f \in C^m_\alpha (A, A), ~ g \in C^n_\alpha (A, A)$ and $h \in C^p_\alpha (A, A)$ be three cochains. Let  $1 \leq i \leq p-1$ and $ m+i \leq j \leq m+p-1$. For any $a_1, a_2, \ldots, a_{m+n+p-1} \in A$,  we set
\begin{align*}
h_{i,j} =&~ (\alpha^{m+n+p -3} a_1) \cdot h ( a_{2,i}^{m+n-2},~ f^{n-1},~ a_{i+m+1, j}^{m+n-2},~ g^{m-1},~ a_{j+n+1, m+n+p-1}^{m+n-2}) \\
&~ + \sum_{\lambda = 1}^{i-1} (-1)^\lambda ~ h ( a^{m+n-1}_{1,\lambda -1} , ~ \alpha^{m+n-2} (a_\lambda \cdot a_{\lambda+1}), ~a^{m+n-1}_{\lambda +2 , i}, ~ f^n, ~ a_{i+m +1, j}^{m+n-1}, ~ g^m, ~ a^{m+n-1}_{j+n+1, m+n+p-1}) \\
&~ + (-1)^i ~ h (  a^{m+n-1}_{1, i-1},~ a_i^{m+n-2} \cdot f^{n-1},~ a^{m+n-1}_{i+m+1, j}, ~ g^m, ~ a^{m+n-1}_{j+n+1, m+n+p-1}),
\end{align*}
\begin{align*}
h'_{i,j} =&~  (-1)^{m+i-1} ~ h (a^{m+n-1}_{1, i-1}, ~ f^{n-1} \cdot a^{m+n-2}_{i+m},~ a^{m+n-1}_{i+m+1, j}, ~ g^m, ~ a^{m+n-1}_{j+n+1, m+n+p-1}) \\
&~ + \sum_{\lambda = m+i}^{j-1} (-1)^\lambda~ h (a^{m+n-1}_{1, i-1}, ~f^n,~ a^{m+n-1}_{i+m, \lambda -1},~ \alpha^{m+n-2} (a_\lambda \cdot a_{\lambda +1}),~ a^{m+n-1}_{\lambda +2, j},~ g^m,~ a^{m+n-1}_{j+n+1, m+n+p-1}) \\
&~ + (-1)^j ~h ( a^{m+n-1}_{1, i-1}, ~ f^n, ~ a^{m+n-1}_{i+m, j-1},~ a_j^{m+n-2} \cdot g^{m-1}, ~ a_{j+n+1, m+n+p-1}^{m+n-1}) 
\end{align*}
and
\begin{align*}
h''_{i,j} =&~  (-1)^{j+n-1} ~ h ( a^{m+n-1}_{1, i-1}, ~ f^n, ~ a^{m+n-1}_{i+m, j-1},~ g^{m-1} \cdot a_{j+n}^{m+n-2}, ~ a_{j+n+1, m+n+p-1}^{m+n-1}  ) \\
&~ + \sum_{\lambda = j+n}^{m+n+p-2} (-1)^\lambda~ h ( a^{m+n-1}_{1, i-1}, ~ f^n, ~ a^{m+n-1}_{i+m, j-1}, ~ g^m,~ a^{m+n-1}_{j+n, \lambda -1}, \alpha^{m+n-2} (a_\lambda \cdot a_{\lambda +1}),~ a^{m+n-1}_{\lambda +2, m+n+p-1} ) \\
&~ + (-1)^{m+n+p-1} h ( a^{m+n-2}_{i, i-1},~ f^{n-1},~ a^{m+n-2}_{i+m, j-1},~ g^{m-1},~ a^{m+n-2}_{j+n, m+n+p-2}) \cdot \alpha^{m+n+p-3} (a_{m+n+p-1}).
\end{align*}

We need the following useful result.
\begin{lemma}\label{h-new-lemma}
Let $(A, \mu, \alpha)$ be a hom-associative algebra and $f \in Z^m_\alpha (A, A),~ g \in Z^n_\alpha (A, A), ~ h \in C^p_\alpha (A, A)$. For $1 \leq i \leq p-1$ and $m+i \leq j \leq m+p-1$,
$$h_{i,j} + h'_{i,j} + h''_{i,j} = \delta_\alpha ((h \circ_{i-1}f) \circ_{j-1} g) (a_1, a_2, \ldots, a_{m+n+p-1}).$$
\end{lemma}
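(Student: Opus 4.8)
The plan is to expand the right-hand side $\delta_\alpha\big((h\circ_{i-1}f)\circ_{j-1}g\big)(a_1,\ldots,a_{m+n+p-1})$ straight from the definition of the coboundary and then reorganize its terms so as to recover $h_{ij}+h'_{ij}+h''_{ij}$. Set $\phi=(h\circ_{i-1}f)\circ_{j-1}g\in C^{m+n+p-2}_\alpha$. First I would unwind the two nested $\circ$-products to record how the $m+n+p-2$ arguments of $\phi$ are distributed: under the standing hypothesis $m+i\le j$, the map $f$ sits in the $i$-th slot of $h$ and $g$ in the $(j-m+1)$-th slot, so the arguments split into five consecutive blocks, namely the outside slots of $h$ before $f$, the $m$ arguments of $f$, the outside slots between $f$ and $g$, the $n$ arguments of $g$, and the outside slots after $g$. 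Along the way one uses the intertwining relations $\alpha\circ f=f\circ\alpha^{\otimes m}$, $\alpha\circ g=g\circ\alpha^{\otimes n}$, $\alpha\circ h=h\circ\alpha^{\otimes p}$ together with the multiplicativity $\alpha\circ\mu=\mu\circ(\alpha\otimes\alpha)$ to collect and normalize all the powers of $\alpha$; this is precisely what produces the uniform exponents $m+n-1$, $m+n-2$ and $m+n-3$ recorded in the statement.

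Next I would substitute this description into the three-part formula for $\delta_\alpha\phi$ and sort its summands by the position of the inner multiplication $\mu(a_\lambda,a_{\lambda+1})$. The leading summand $\mu(\alpha^{m+n+p-3}a_1,\phi(a_2,\ldots,a_{m+n+p-1}))$ is exactly the first term of $h_{ij}$, and the trailing summand is the last term of $h''_{ij}$. In the alternating sum $\sum_\lambda(-1)^\lambda\phi(\ldots,\mu(a_\lambda,a_{\lambda+1}),\ldots)$ the contraction lands in one of the five blocks according to $\lambda$: when it falls among the outside slots before $f$ (this is the range $1\le\lambda\le i-1$) one gets the middle sum of $h_{ij}$; when it falls between $f$ and $g$ (the range $m+i\le\lambda\le j-1$) one gets the middle sum of $h'_{ij}$; and when it falls after $g$ (the range $j+n\le\lambda\le m+n+p-2$) one gets the middle sum of $h''_{ij}$. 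What is left over are the summands in which the contraction lies strictly inside the $f$-block or strictly inside the $g$-block.

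The crux is to dispose of these two internal families, and this is where the cocycle hypotheses enter. Collecting the terms in which the contraction is inside the $f$-block and factoring out the common sign $(-1)^{i-1}$ and the (now fixed) evaluation of $h$ with $g$ plugged into its later slot, one recognizes inside the $i$-th slot of $h$ exactly the alternating middle sum of $\delta_\alpha f$ applied to the block $(a_i,\ldots,a_{i+m})$, the powers of $\alpha$ being supplied by multiplicativity through $\alpha^{n-1}\mu(a_\lambda,a_{\lambda+1})=\mu(\alpha^{n-1}a_\lambda,\alpha^{n-1}a_{\lambda+1})$. Since $f\in Z^m_\alpha(A,A)$, the cocycle identity $\delta_\alpha f=0$ rewrites this middle sum as the negative of its two edge terms $\mu(\alpha^{m+n-2}a_i,f^{n-1})$ and $(-1)^{m+1}\mu(f^{n-1},\alpha^{m+n-2}a_{i+m})$. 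Carrying the factor $(-1)^{i-1}$ through this substitution turns the first edge term into the last summand of $h_{ij}$ (sign $(-1)^i$) and the second into the first summand of $h'_{ij}$ (sign $(-1)^{m+i-1}$, using $(-1)^{i-1}(-1)^{m+2}=(-1)^{m+i-1}$). The same computation with $g\in Z^n_\alpha(A,A)$ converts the internal-$g$ family into the last summand of $h'_{ij}$ (sign $(-1)^j$) and the first summand of $h''_{ij}$ (sign $(-1)^{j+n-1}$). Reassembling the five blocks together with these four recovered edge terms yields $h_{ij}+h'_{ij}+h''_{ij}$, as required.

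I expect the only real difficulty to be bookkeeping rather than conceptual: one must keep the powers of $\alpha$ consistent as the coboundary applies $\alpha$ to every surviving argument, and one must check that the signs generated by the two cocycle substitutions match the prescribed signs $(-1)^i,(-1)^{m+i-1},(-1)^j,(-1)^{j+n-1}$. The point demanding the most care is the off-by-one shift between the leading term, where $a_1$ is deleted, and a generic middle term, where no argument is deleted but one adjacent pair is merged; handling this shift correctly is what aligns the index ranges $1\le\lambda\le i-1$, $m+i\le\lambda\le j-1$ and $j+n\le\lambda\le m+n+p-2$ with the three middle sums in $h_{ij}$, $h'_{ij}$ and $h''_{ij}$.
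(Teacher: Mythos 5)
Your proposal is correct and follows essentially the same route as the paper's own proof: expand $\delta_\alpha\big((h\circ_{i-1}f)\circ_{j-1}g\big)$ from the definition, split the alternating sum over $\lambda$ into the five ranges $1\le\lambda\le i-1$, $i\le\lambda\le m+i-1$, $m+i\le\lambda\le j-1$, $j\le\lambda\le j+n-1$, $j+n\le\lambda\le m+n+p-2$, and use the cocycle conditions on $f$ and $g$ to collapse the two internal ranges to the four edge terms with exactly the signs $(-1)^i$, $(-1)^{m+i-1}$, $(-1)^j$, $(-1)^{j+n-1}$ you computed. Your sign bookkeeping, block placement (with $f$ in the $i$-th slot and $g$ in the $(j-m+1)$-th slot of $h$), and $\alpha$-power normalization all match the paper's argument.
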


\begin{proof}
We have
\begin{align*}
&~\delta_\alpha ((h \circ_{i-1}f) \circ_{j-1} g) (a_1, a_2, \ldots, a_{m+n+p-1}) \\
=&~  (\alpha^{m+n+p -3} a_1) \cdot ((h \circ_{i-1}f) \circ_{j-1} g) ( a_2, \ldots, a_{m+n+p-1}) \\
&~ + \sum_{\lambda = 1}^{m+n+p-2} (-1)^\lambda ((h \circ_{i-1}f) \circ_{j-1} g) ( \alpha (a_1), \ldots, \alpha (a_{\lambda -1}), a_\lambda \cdot a_{\lambda +1}, \alpha (a_{\lambda +2}), \ldots, \alpha (a_{m+n+p-1})     ) \\
&~ + (-1)^{m+n+p-1} ((h \circ_{i-1}f) \circ_{j-1} g) (a_1, a_2, \ldots, a_{m+n+p-2}) \cdot \alpha^{m+n+p-3} (a_{m+n+p-1}) \\
=&~  (\alpha^{m+n+p -3} a_1) \cdot h ( a_{2,i}^{m+n-2},~ f^{n-1},~ a_{i+m+1, j}^{m+n-2},~ g^{m-1},~ a_{j+n+1, m+n+p-1}^{m+n-2})\\
&~ + \sum_{\lambda = 1}^{i-1} + \sum_{\lambda = i}^{m+i-1} + \sum_{\lambda = m+i}^{j-1} + \sum_{\lambda = j}^{j+n -1} + \sum_{\lambda = j+n}^{m+n + p-2} \\
&~ + (-1)^{m+n+p-1} h ( a^{m+n-2}_{i, i-1},~ f^{n-1},~ a^{m+n-2}_{i+m, j-1},~ g^{m-1},~ a^{m+n-2}_{j+n, m+n+p-2}) \cdot \alpha^{m+n+p-3} (a_{m+n+p-1}).
\end{align*} 
Note that
\begin{align*}
\sum_{\lambda = 1}^{i-1} = \sum_{\lambda = 1}^{i-1} (-1)^\lambda ~ h ( a^{m+n-1}_{1,\lambda -1} , ~ \alpha^{m+n-2} (a_\lambda \cdot a_{\lambda+1}), ~a^{m+n-1}_{\lambda +2 , i}, ~ f^n, ~ a_{i+m +1, j}^{m+n-1}, ~ g^m, ~ a^{m+n-1}_{j+n+1, m+n+p-1}),
\end{align*}
\begin{align*}
 \sum_{\lambda = m+i}^{j-1} =  \sum_{\lambda = m+i}^{j-1} (-1)^\lambda~ h (a^{m+n-1}_{1, i-1}, ~f^n,~ a^{m+n-1}_{i+m, \lambda -1},~ \alpha^{m+n-2} (a_\lambda \cdot a_{\lambda +1}),~ a^{m+n-1}_{\lambda +2, j},~ g^m,~ a^{m+n-1}_{j+n+1, m+n+p-1})
\end{align*}
and
\begin{align*}
\sum_{\lambda = j+n}^{m+n+p-2} = \sum_{\lambda = j+n}^{m+n+p-2} (-1)^\lambda~ h ( a^{m+n-1}_{1, i-1}, ~ f^n, ~ a^{m+n-1}_{i+m, j-1}, ~ g^m,~ a^{m+n-1}_{j+n, \lambda -1},~ \alpha^{m+n-2} (a_\lambda \cdot a_{\lambda +1}),~ a^{m+n-1}_{\lambda +2, m+n+p-1} ).
\end{align*}
Moreover, since $f$ is a cocycle, we observe that
\begin{align*}
\sum_{\lambda = i}^{m + i-1} = &~(-1)^i ~ h (  a^{m+n-1}_{1, i-1},~ a_i^{m+n-2} \cdot f^{n-1},~ a^{m+n-1}_{i+m+1, j}, ~ g^m, ~ a^{m+n-1}_{j+n+1, m+n+p-1}) \\
&~ + (-1)^{m+i-1} ~ h (a^{m+n-1}_{1, i-1}, ~ f^{n-1} \cdot a^{m+n-2}_{i+m},~ a^{m+n-1}_{i+m+1, j}, ~ g^m, ~ a^{m+n-1}_{j+n+1, m+n+p-1}).
\end{align*}
Similarly, $g$ is a cocycle implies that
\begin{align*}
\sum_{\lambda = j}^{j + n -1} = &~ (-1)^j ~h ( a^{m+n-1}_{1, i-1}, ~ f^n, ~ a^{m+n-1}_{i+m, j-1},~ a_j^{m+n-2} \cdot g^{m-1}, ~ a_{j+n+1, m+n+p-1}^{m+n-1}) \\
&~ + (-1)^{j+n-1} ~ h ( a^{m+n-1}_{1, i-1}, ~ f^n, ~ a^{m+n-1}_{i+m, j-1},~ g^{m-1} \cdot a_{j+n}^{m+n-2}, ~ a_{j+n+1, m+n+p-1}^{m+n-1}  ).
\end{align*}
Hence, the result follows from the definition of $h_{i,j}, ~ h'_{i,j}$ and $h''_{i,j}$.
\end{proof}

The above Lemma will be used to prove the next proposition. We first take $f \in Z^m_\alpha (A, A), ~ g \in Z^n_\alpha (A,A)$ and $h \in Z^p_\alpha (A, A)$, for $m, n, p \geq 1$. Then one can easily check that
$$h \circ (f \cup_\alpha g) - (-1)^{n (p-1)} (h \circ f) \cup_\alpha g  - f \cup_\alpha (h \circ g) = 0, ~~ \text{ whenever } p= 1.$$
To prove this, one only needs that $h$ is a $1$-cocycle.
If $p \neq 1$, the left hand side of the above equality may not be zero. However, the next proposition suggests that this is given by a coboundary.
This type of verifications appear in many places \cite[Theorem 5]{gers} (see also \cite{trad}).

\begin{prop}\label{h-lemma}
Let $(A, \mu, \alpha)$ be a hom-associative algebra and $f \in Z^m_\alpha (A, A), ~ g \in Z^n_\alpha (A, A)$ and $h \in Z^p_\alpha (A, A)$. Let
$$ H = \sum_{i=0}^{p-2} \sum_{j= m+i}^{ m + p - 2} (-1)^{(m-1)i + (n-1)j} (h \circ_i f) \circ_j g.$$
Then $H \in C^{m+n + p -2}_\alpha (A, A)$ and
$$ \delta_\alpha H = (-1)^{(m-1)n} \big[ h \circ (f \cup_\alpha g) - (-1)^{n (p-1)} (h \circ f) \cup_\alpha g  - f \cup_\alpha (h \circ g)  \big].$$
\end{prop}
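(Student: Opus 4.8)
The plan is to differentiate $H$ term by term and feed each summand into Lemma \ref{h-new-lemma}. First, membership is immediate: the composition maps $\circ_i \colon V^\alpha_m \otimes V^\alpha_n \to V^\alpha_{m+n}$ of Section \ref{sec-3} preserve the $\alpha$-commutativity condition, so each $(h \circ_i f) \circ_j g$ lies in $V^\alpha_{(p-1)+(m-1)+(n-1)} = V^\alpha_{m+n+p-3} = C^{m+n+p-2}_\alpha (A,A)$, whence $H \in C^{m+n+p-2}_\alpha (A,A)$.

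Next, by linearity of $\delta_\alpha$,
$$\delta_\alpha H = \sum_{i=0}^{p-2} \sum_{j=m+i}^{m+p-2} (-1)^{(m-1)i + (n-1)j}\, \delta_\alpha\big( (h \circ_i f) \circ_j g \big).$$
I would reindex by $i \mapsto i+1$, $j \mapsto j+1$ so that the ranges become $1 \le i \le p-1$ and $m+i \le j \le m+p-1$, matching the hypotheses of Lemma \ref{h-new-lemma}, and then replace each $\delta_\alpha((h \circ_i f)\circ_j g)$ by $h_{i+1,j+1} + h'_{i+1,j+1} + h''_{i+1,j+1}$. This step already absorbs the cocycle conditions $\delta_\alpha f = 0$ and $\delta_\alpha g = 0$, since they were used in the Lemma precisely to collapse the blocks $\sum_{\lambda=i}^{m+i-1}$ and $\sum_{\lambda=j}^{j+n-1}$ down to the four ``edge'' terms $a_i \cdot f$, $f \cdot a_{i+m}$, $a_j \cdot g$, $g \cdot a_{j+n}$.

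The heart of the argument is to reorganize the resulting double sum. I would split the terms of each $h_{ij}, h'_{ij}, h''_{ij}$ into three families: the terms in which $\mu$ acts only on the free arguments of $h$ (the head term, the interior sums $\sum_{\lambda=1}^{i-1}$, $\sum_{\lambda=m+i}^{j-1}$, $\sum_{\lambda=j+n}^{m+n+p-2}$, and the tail term); the $f$-edge terms $a_i \cdot f$, $f \cdot a_{i+m}$; and the $g$-edge terms $a_j \cdot g$, $g \cdot a_{j+n}$. Across neighbouring values of the summation indices, the interior sums telescope, and the signs $(-1)^{(m-1)i+(n-1)j}$ are chosen exactly so that the overwhelming majority of these terms cancel in pairs. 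What remains combines with the edge terms through the cocycle condition $\delta_\alpha h = 0$ and the hom-associativity (\ref{hom-ass-cond}) of $\mu$; this moves the distinguished copy of $\mu$ either into the outer slot of $h$ or outside $h$ altogether, and after collecting the surviving terms at the extreme values of $i$ and $j$ they reassemble, via Remark \ref{cup-circ}, into precisely $h \circ (f \cup_\alpha g)$, $(h \circ f)\cup_\alpha g$, and $f \cup_\alpha (h\circ g)$, with the signs recorded in the statement.

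The main obstacle is this last bookkeeping: verifying that the interior telescoping is exact, and that the residual head, tail and edge terms regroup — under repeated use of $\delta_\alpha h = 0$ and (\ref{hom-ass-cond}) — into the three cup/composition expressions with exactly the signs $(-1)^{(m-1)n}$ and $-(-1)^{(m-1)n + n(p-1)}$. As a sanity check I would first treat the smallest case $m=n=1$, $p=2$, where $H = (h\circ_0 f)\circ_1 g$ reduces to $h(f(a_1),g(a_2))$: here $\delta_\alpha H$ expands into six terms, the derivation conditions on $f$ and $g$ split two of them, and three applications of $\delta_\alpha h = 0$ transform the result into $h\circ(f\cup_\alpha g) + (h\circ f)\cup_\alpha g - f\cup_\alpha(h\circ g)$, confirming both the mechanism and the signs before attacking the general indices. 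This mirrors the proof of \cite[Theorem 5]{gers}, the only genuinely new feature being the careful tracking of the powers of $\alpha$ inserted by the twisted $\circ_i$ and $\cup_\alpha$.
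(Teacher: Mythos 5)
Your overall skeleton does coincide with the paper's: the membership argument, the termwise expansion of $\delta_\alpha H$ through Lemma \ref{h-new-lemma} (which is indeed where the cocycle conditions on $f$ and $g$ are consumed, exactly as you say), and the recognition of the three cup/composition terms via Remark \ref{cup-circ}. But the step you defer as ``the main obstacle'' is essentially the whole proof, and the mechanism you sketch for it --- interior sums telescoping so that terms ``cancel in pairs'' across neighbouring $(i,j)$, with $\delta_\alpha h = 0$ entering only afterwards to absorb residual edge terms --- is not how the cancellation works and would not close as stated. No pairwise cancellation between cells occurs at all. The organizing identity (the paper's (\ref{label-ts})) is a \emph{three-term} relation across \emph{shifted} index pairs:
\begin{align*}
h_{ij} + (-1)^{m-1}\, h'_{i+1,j} + (-1)^{(m-1)+(n-1)}\, h''_{i+1,j+1} = (\delta_\alpha h)\big( a^{m+n-2}_{1,i},\, f^{n-1},\, a^{m+n-2}_{i+m+1,j},\, g^{m-1},\, a^{m+n-2}_{j+n+1,m+n+p-1} \big) = 0,
\end{align*}
that is, pieces coming from three \emph{different} cells of the double sum bundle into a single evaluation of $\delta_\alpha h$ at arguments with $f$ and $g$ inserted, and vanish wholesale because $h$ is a cocycle. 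This identity must itself be verified by expanding $\delta_\alpha h$ and matching powers of $\alpha$; for that step only multiplicativity of $\alpha$ and the $\alpha$-equivariance of $f$, $g$, $h$ are needed --- contrary to your sketch, hom-associativity (\ref{hom-ass-cond}) is not invoked in this proposition.

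The second device missing from your outline is what turns ``collecting the surviving terms at the extreme values of $i$ and $j$'' into an actual computation: the paper \emph{defines} the out-of-range symbols by $h_{0,j} = (f \cup_\alpha (h \circ_{j-m} g))(a_1,\ldots)$, $h'_{i,m+i-1} = (-1)^{m+i-1}(h \circ_{i-1}(f \cup_\alpha g))(a_1,\ldots)$ and $h''_{i,m+p} = (-1)^{m+n+p-1}((h \circ_{i-1} f) \cup_\alpha g)(a_1,\ldots)$ (equations (\ref{equan1})--(\ref{equan3})), checks that the three-term relation then holds on the whole rectangle $0 \leq i \leq p-1$, $m+i \leq j \leq m+p-1$, and sums it with weights $(-1)^{(m-1)(i-1)+(n-1)(j-1)}$. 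The interior part of this vanishing sum is precisely $(\delta_\alpha H)(a_1, \ldots, a_{m+n+p-1})$ by Lemma \ref{h-new-lemma}, and the leftover boundary rows and columns assemble into $f \cup_\alpha (h \circ g)$, $h \circ (f \cup_\alpha g)$ and $(h \circ f) \cup_\alpha g$ with the stated signs. Your $m=n=1$, $p=2$ sanity check is consistent with all of this, but it does not substitute for the shifted-index identity and the boundary extension, which together carry the entire combinatorial content; as submitted, the proposal is a correct plan with the decisive step unproven.
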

\begin{proof}
For $1 \leq i \leq p-1, ~ m+i \leq j \leq m + p-1$ and $a_1, a_2, \ldots, a_{m+n+p-1} \in A,$ we have from Lemma \ref{h-new-lemma} that
\begin{align}\label{new-four}
 h_{i,j} + h'_{i,j} + h''_{i,j} = \delta_\alpha (  (h \circ_{i-1} f) \circ_{j-1} g) (a_1, a_2, \ldots, a_{m+n+p-1}).
\end{align}
In particular, if $m +i + 1 \leq j \leq m + p -2$, then
\begin{align}\label{label-ts}
&~ h_{i,j}  + (-1)^{m-1} h'_{i+1, j} + (-1)^{(m-1) + (n - 1)} h''_{i+1, j+1} \\
=&~  (\delta_\alpha h)(  a^{m+n-2}_{1, i}, ~ f^{n-1}, ~ a^{m+n-2}_{i+m+1, j}, ~ g^{m-1},~  a^{m+n-2}_{j+n+1, m+n+p -1}) = 0 \nonumber,
\end{align}
as $h$ is a cocycle. One may also extend the range of indices for which 
(\ref{label-ts}) is valid by setting
\begin{align}
h_{0,j} =&~ (f \cup_\alpha (h \circ_{j - m} g))(a_1, \ldots, a_{m+n+p-1}) \quad \text{ for } j = m, m+1, \ldots, m+p-1 \label{equan1}\\
h'_{i, m+i-1} =&~ (-1)^{m +i -1} (h \circ_{i-1} (f \cup_\alpha g))(a_1, \ldots, a_{m+n+p-1}) \quad \text{ for } i = 1, 2, \ldots, p  \label{equan2}\\
h''_{i, m+p} =&~ (-1)^{m+n+p-1} ((h \circ_{i-1} f) \cup_\alpha g)(a_1, \ldots, a_{m+n+p-1}) \quad \text{for } i = 1, 2, \ldots, p. \label{equan3}
\end{align}
Therefore,  (\ref{label-ts}) hold for all those pairs $(i,j)$ such that $0 \leq i \leq p-1$ and $m+i \leq j \leq m + p -1$. Hence,
\begin{align}\label{eqn-th}
\sum_{(i,j)}^{} (-1)^{(m-1)(i-1) + (n-1)(j-1)} ~  \big[ h_{i,j}  + (-1)^{m-1} h'_{i+1, j} + (-1)^{(m-1) + (n - 1)} h''_{i+1, j+1} \big]  = 0 ,
\end{align}
where the sum is indexed over all the pairs $(i,j)$ such that $0 \leq i \leq p-1$ and $m+i \leq j \leq m + p -1$. Moreover, it follows from (\ref{new-four}) that
$$ (\delta_\alpha H)(a_1, \ldots, a_{m+n+p-1}) = \sum_{i=1}^{p-1} \sum_{j=m+i}^{m+p-1} (-1)^{(m-1)(i-1) + (n-1)(j-1)} ( h_{i,j} + h'_{i,j} + h''_{i,j}  ).$$
One can easily observe that the whole expression of $(\delta_\alpha H)(a_1, \ldots, a_{m+n+p-1})$ is contained in the left hand side of (\ref{eqn-th}). Although, the left hand side of (\ref{eqn-th}) contain additional terms (with some signs) which are of the forms (\ref{equan1}), (\ref{equan2}) and (\ref{equan3}). More precisely, by substituting the expression of 
$(\delta_\alpha H)(a_1, \ldots, a_{m+n+p-1})$ in (\ref{eqn-th}), we get
\begin{align*}
(\delta_\alpha H)(a_1, \ldots, a_{m+n+p-1})  &~+ \sum_{j= m}^{m + p -1} (-1)^{-(m-1) 
+ (n-1)(j-1)} h_{0,j} \\
&~+ \sum_{i=0}^{p-1} (-1)^{(m-1)(i-1) + (n-1)(m+i-1) + (m-1)} h'_{i+1, m+i}  \\
&~ +  \sum_{i=0}^{p-1} (-1)^{(m-1)(i-1) + (n-1)(m+p-2) + (m-1) + (n-1)} h''_{i+1, m+p} = 0.
\end{align*}
Then by a straightforward calculation using (\ref{equan1}), (\ref{equan2}) and (\ref{equan3}), we get
$$ \delta_\alpha H + (-1)^{(m-1) n} f \cup_\alpha (h \circ g) + (-1)^{(m-1)n +1} h \circ (f \cup_\alpha g) + (-1)^{(m-1)n + n (p-1)} (h \circ f) \cup_\alpha g = 0.$$
Hence the result follows.
\end{proof}

\begin{prop}\label{h-lemma2}
	For any $f \in C^m_\alpha (A, A),~ g \in C^n_\alpha (A, A)$ and $h \in C^p_\alpha (A, A)$, we have
 $$ (f \cup_\alpha g) \circ h = (f \circ h ) \cup_\alpha g + (-1)^{m (p-1)} f \cup_\alpha ( g \circ h).$$
\end{prop}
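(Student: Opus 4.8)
The plan is to expand the left-hand side directly from the definition of the $\circ$ product and to split the resulting sum according to where the inserted copy of $h$ lands. Writing $f \cup_\alpha g \in C^{m+n}_\alpha(A,A)$ and applying the formula for $\circ$ (with inner cochain $h$ of degree $p$, so that the untouched arguments are twisted by $\alpha^{p-1}$ and the $i$-th summand carries the sign $(-1)^{(p-1)(i-1)}$), I would start from
$$((f \cup_\alpha g) \circ h)(a_1, \ldots, a_{m+n+p-1}) = \sum_{i=1}^{m+n} (-1)^{(p-1)(i-1)} (f \cup_\alpha g)(\alpha^{p-1} a_1, \ldots, h(a_i, \ldots, a_{i+p-1}), \ldots, \alpha^{p-1} a_{m+n+p-1}).$$
Since the cup product feeds its first $m$ arguments into $f$ and its last $n$ into $g$, the insertion position $i$ lies either in the $f$-block ($1 \le i \le m$) or in the $g$-block ($m+1 \le i \le m+n$). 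I would handle these two ranges separately and show that they reproduce the two terms on the right-hand side.

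For $1 \le i \le m$ the copy of $h$ is absorbed into an $f$-argument, while the $g$-block consists entirely of $\alpha^{p-1} a_{m+p}, \ldots, \alpha^{p-1} a_{m+n+p-1}$. Here I would use the multiplicativity condition $\alpha \circ g = g \circ \alpha^{\otimes n}$ to pull the factor $\alpha^{p-1}$ out of $g$, and $\alpha \circ f = f \circ \alpha^{\otimes m}$ to push the outer power $\alpha^{n-1}$ of the cup product inside $f$. Matching the sign $(-1)^{(p-1)(i-1)}$ against the definition of $\circ$, the $f$-block reassembles into $(f \circ h)(a_1, \ldots, a_{m+p-1})$, and the $g$-slot acquires the weight $\alpha^{m-1}\alpha^{p-1} = \alpha^{m+p-2} = \alpha^{(m+p-1)-1}$ that is exactly what the cup product of the degree-$(m+p-1)$ cochain $f \circ h$ with $g$ demands; thus this partial sum equals $(f \circ h) \cup_\alpha g$.

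For $m+1 \le i \le m+n$ I would write $i = m + i'$ with $1 \le i' \le n$. Now $h$ is absorbed into a $g$-argument, the $f$-block is the constant $f(\alpha^{p-1} a_1, \ldots, \alpha^{p-1} a_m) = \alpha^{p-1} f(a_1, \ldots, a_m)$, and the sign factors as $(-1)^{(p-1)(m+i'-1)} = (-1)^{m(p-1)} (-1)^{(p-1)(i'-1)}$. The inner sum over $i'$ reassembles the $g$-block into $(g \circ h)(a_{m+1}, \ldots, a_{m+n+p-1})$; relocating the surviving powers of $\alpha$ by multiplicativity again, the $f$-slot gets $\alpha^{n-1}\alpha^{p-1} = \alpha^{n+p-2} = \alpha^{(n+p-1)-1}$ and the $(g\circ h)$-slot gets $\alpha^{m-1}$, precisely as prescribed by $f \cup_\alpha (g \circ h)$. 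Hence this partial sum equals $(-1)^{m(p-1)} f \cup_\alpha (g \circ h)$, and adding the two contributions yields the claimed identity.

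I expect no conceptual obstacle here: hom-associativity of $\mu$ plays no role (one never reassociates $\mu$), and the only tool needed is the equivariance $\alpha \circ (-) = (-) \circ \alpha^{\otimes}$ enjoyed by every cochain. The single point requiring care is the bookkeeping of the exponents of $\alpha$ — verifying that, after the split, the two weights attached to the arguments of $\mu$ are exactly those dictated by the cup products $(f \circ h) \cup_\alpha g$ and $f \cup_\alpha (g \circ h)$, whose outer factors have degrees $m+p-1$ and $n+p-1$ respectively.
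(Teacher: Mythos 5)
Your proof is correct and follows essentially the same route as the paper's: expand $(f \cup_\alpha g) \circ h$ from the definition of $\circ$, split the sum at $i = m$ according to whether $h$ is inserted into the $f$-block or the $g$-block, factor the sign $(-1)^{(p-1)(m+i'-1)} = (-1)^{m(p-1)}(-1)^{(p-1)(i'-1)}$ out of the second range, and use the equivariance $\alpha \circ f = f \circ \alpha^{\otimes m}$ to match the exponents $\alpha^{m+p-2}$ and $\alpha^{n+p-2}$ with those demanded by $(f \circ h) \cup_\alpha g$ and $f \cup_\alpha (g \circ h)$. Your side remark that hom-associativity plays no role is also consistent with the paper's computation, which invokes only the $\alpha$-equivariance of the cochains involved.
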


\begin{proof}
For any $a_1, a_2, \ldots, a_{m+n+p-1} \in A$,
\begin{align*}
&~((f \cup_\alpha g) \circ h) (a_1, a_2, \ldots, a_{m+n+p-1}) \\
=&~ \sum_{i=1}^{m+n} (-1)^{(p-1)(i-1)} ~ (f \cup_\alpha g) (  a^{p-1}_{1, i-1}, h (a_i, \ldots, a_{i+p-1}), a^{p-1}_{i+p, m+n+p-1}) \\
=&~ \sum_{i=1}^{m} (-1)^{(p-1)(i-1)} \big(\alpha^{n-1} f ( \alpha^{p-1} a_1, \ldots, h (a_i, \ldots, a_{i+p-1}), \ldots, \alpha^{p-1} a_{m+p-1}) \big) \cdot\\ &~ \qquad \qquad \big( \alpha^{m+p-2} g (a_{m+p}, \ldots, a_{m+n+p-1})\big) \\
&~ + \sum_{i=m+1}^{m+n} (-1)^{(p-1)(i-1)} ~ \big( \alpha^{n+p-2} f(a_1, \ldots, a_m) \big) \cdot \\
&~ \qquad \qquad \big( \alpha^{m-1} g(\alpha^{p-1} a_{m+1}, \ldots, h(a_i, \ldots, a_{i+p-1}), \ldots, \alpha^{p-1} a_{m+n+p-1}) \big) \\
=&~ \big( \sum_{i=1}^{m} (-1)^{(p-1)(i-1)} \alpha^{n-1} f ( \alpha^{p-1} a_1, \ldots, h (a_i, \ldots, a_{i+p-1}), \ldots, \alpha^{p-1} a_{m+p-1}) \big) \cdot \\ &~ \qquad \qquad \big( \alpha^{m+p-2} g (a_{m+p}, \ldots, a_{m+n+p-1})\big) \\
 &~+ (-1)^{m(p-1)} \big( \alpha^{n+p-2} f(a_1, \ldots, a_m) \big) \cdot \\
 &~ \qquad \qquad  \big(  \sum_{i=1}^{n} (-1)^{(p-1)(i-1)} \alpha^{m-1} g (\alpha^{p-1} a_{m+1}, \ldots, h(a_{m+i}, \ldots, a_{m+p-1}), \ldots, \alpha^{p-1} a_{m+n+p-1}))  \big)\\
 =&~ \big(\alpha^{n-1} (f \circ h)(a_1, \ldots, a_{m+p-1}) \big) \cdot \big( \alpha^{m+p-2} g (a_{m+p}, \ldots, a_{m+n+p-1}) \big) \\
 &~ + (-1)^{m(p-1)}  \big( \alpha^{n+p-2} f(a_1, \ldots, a_m)  \big) \cdot \big(  \alpha^{m-1} (g \circ h) (a_{m+1}, \ldots, a_{m+n+p-1})  \big) \\
 =&~ \big(  (f \circ h ) \cup_\alpha g + (-1)^{m (p-1)} f \cup_\alpha ( g \circ h)  \big) (a_1, a_2, \ldots, a_{m+n+p-1}).
\end{align*}
\end{proof}

Finally, we prove the following.
\begin{prop}\label{final-prop}
	If $f \in Z^m_\alpha (A, A), ~g \in Z^n_\alpha (A, A) $ and $h \in Z_\alpha^p (A, A)$ then
	$$ [f \cup_\alpha g , h]_\alpha - [f, h]_\alpha \cup_\alpha g - (-1)^{m (p-1)} f \cup_\alpha [g, h]_\alpha = ~ \pm \delta_\alpha H.$$
\end{prop}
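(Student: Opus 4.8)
The plan is to expand the left-hand side purely algebraically using the definition of the shifted graded Lie bracket, and then feed in the two structural identities already established: Proposition~\ref{h-lemma2}, which distributes a $\circ h$ over a cup product, and Proposition~\ref{h-lemma}, which identifies the obstruction $h\circ(f\cup_\alpha g) - (-1)^{n(p-1)}(h\circ f)\cup_\alpha g - f\cup_\alpha(h\circ g)$ with the coboundary $(-1)^{(m-1)n}\delta_\alpha H$. No new computation with the explicit multilinear formulas is needed; everything reduces to matching terms and tracking signs.

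First I would write out each of the three brackets on the left-hand side using the definition $[f,g]_\alpha = f\circ g - (-1)^{(m-1)(n-1)} g\circ f$. Since $f\cup_\alpha g\in C^{m+n}_\alpha(A,A)$, this gives
\begin{align*}
[f\cup_\alpha g, h]_\alpha &= (f\cup_\alpha g)\circ h - (-1)^{(m+n-1)(p-1)}\, h\circ(f\cup_\alpha g),\\
[f,h]_\alpha\cup_\alpha g &= (f\circ h)\cup_\alpha g - (-1)^{(m-1)(p-1)}(h\circ f)\cup_\alpha g,\\
(-1)^{m(p-1)} f\cup_\alpha[g,h]_\alpha &= (-1)^{m(p-1)} f\cup_\alpha(g\circ h) - (-1)^{(m+n-1)(p-1)} f\cup_\alpha(h\circ g),
\end{align*}
where in the last line I used $m(p-1)+(n-1)(p-1) = (m+n-1)(p-1)$.

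Next I would apply Proposition~\ref{h-lemma2} to the single term $(f\cup_\alpha g)\circ h$, replacing it by $(f\circ h)\cup_\alpha g + (-1)^{m(p-1)} f\cup_\alpha(g\circ h)$. After this substitution the pieces $(f\circ h)\cup_\alpha g$ and $(-1)^{m(p-1)} f\cup_\alpha(g\circ h)$ cancel exactly against the corresponding terms from $[f,h]_\alpha\cup_\alpha g$ and $(-1)^{m(p-1)} f\cup_\alpha[g,h]_\alpha$. What survives is precisely the three terms carrying an inner $h$, namely
$$-(-1)^{(m+n-1)(p-1)} h\circ(f\cup_\alpha g) + (-1)^{(m-1)(p-1)}(h\circ f)\cup_\alpha g + (-1)^{(m+n-1)(p-1)} f\cup_\alpha(h\circ g).$$

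Finally I would factor out $(-1)^{(m+n-1)(p-1)}$ and use $(m-1)(p-1)-(m+n-1)(p-1) = -n(p-1)$ to rewrite the surviving expression as
$$-(-1)^{(m+n-1)(p-1)}\big[\, h\circ(f\cup_\alpha g) - (-1)^{n(p-1)}(h\circ f)\cup_\alpha g - f\cup_\alpha(h\circ g)\,\big].$$
The bracketed quantity is exactly the one appearing in Proposition~\ref{h-lemma}, so it equals $(-1)^{(m-1)n}\delta_\alpha H$. Hence the entire left-hand side equals $-(-1)^{(m+n-1)(p-1)+(m-1)n}\delta_\alpha H = \pm\,\delta_\alpha H$, which is the assertion. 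The only genuine hazard is the sign bookkeeping: all three surviving terms must be brought to a common sign factor before invoking Proposition~\ref{h-lemma}, so one must keep careful track of the parities of $m$, $n$ and $p$; but there is no structural difficulty once the two prior propositions are available.
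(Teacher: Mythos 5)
Your proposal is correct and takes essentially the same route as the paper's proof: expand the three brackets via the definition of $[-,-]_\alpha$, substitute Proposition~\ref{h-lemma2} for $(f \cup_\alpha g) \circ h$ so that the terms $(f \circ h)\cup_\alpha g$ and $(-1)^{m(p-1)} f \cup_\alpha (g \circ h)$ cancel, and match the surviving inner-$h$ terms against Proposition~\ref{h-lemma}. Your sign bookkeeping is in fact slightly more explicit than the paper's, which absorbs the overall factor $-(-1)^{(m+n-1)(p-1)+(m-1)n}$ into the $\pm$ of the statement.
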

\begin{proof}
	We have
\begin{align*}
&~[f \cup_\alpha g , h]_\alpha - [f, h]_\alpha \cup_\alpha g - (-1)^{m (p-1)} f \cup_\alpha [g, h]_\alpha \\
=&~ ( f \cup_\alpha g) \circ h - (-1)^{ (p-1) (m+n-1)  } h \circ ( f \cup_\alpha g) - (f \circ h) \cup_\alpha g + (-1)^{(m-1)(p-1)} (h \circ f) \cup_\alpha g \\
&~ - (-1)^{m (p-1)} f \cup_\alpha (g \circ h) + (-1)^{m (p-1)} (-1)^{(n-1)(p-1)} f \cup_\alpha (h \circ g) \\
=&~ - (-1)^{m (p-1)} \big( ~  (-1)^{(n-1)(p-1)} h \circ (f \cup_\alpha g) + (-1)^p (h \circ f) \cup_\alpha g - (-1)^{(n-1)(p-1)} f \cup_\alpha (h \circ g) ~ \big) \\
&~ \hspace*{3.5in} (\text{by Proposition \ref{h-lemma2}}) \\
=&~ \pm \big( ~  h \circ (f \cup_\alpha g)  -  f \cup_\alpha (h \circ g) + (-1)^{np - n + 1}  (h \circ f) \cup_\alpha g ~\big) \\
=&~ \pm \delta_\alpha H. ~~ \quad \text{(by Proposition \ref{h-lemma})}
\end{align*}
\end{proof}

\begin{remark}\label{rem-leibniz}
It follows from Proposition \ref{final-prop} that at the level of cohomology, the degree $-1$ graded Lie bracket $[-,-]_\alpha$ satisfies the Leibniz rule with respect to the cup product $\cup_\alpha$ on the cohomology.
\end{remark}


As a summary, we get the following.
\begin{thm}\label{final-thm-old}
Let $(A, \mu, \alpha)$ be a hom-associative algebra. Then its Hochschild type cohomology $H^\bullet_\alpha (A, A)$ inherits a Gerstenhaber algebra structure.
\end{thm}

When $\alpha =$ identity, one can extend the bracket and the cup product from the zero-th cochains and compatible to the Hochschild coboundary as Propositions \ref{brack-leibniz}, \ref{cup-leibniz}. Thus, one gets the Gerstenhaber algebra structure on Hochschild cohomology $\oplus_{n \geq 0} H^n (A, A)$ \cite{gers}.

\begin{remark}
It was shown in \cite{das1} that the Hochschild cochain complex defining the cohomology of a hom-associative algebra inherits a structure of an operad with a multiplication.  As a consequence, we get a Gerstenhaber algebra structure on the cohomology. However, the proof of the present paper is direct and therefore it passes through some important propositions which might be useful as well. Moreover, the results of the present paper (arXiv:1805.01207) was obtained by the author before the paper \cite{das1} was submitted in arXiv.
\end{remark}

\begin{remark}
Let $(A, \mu)$ be an associative algebra and $\alpha : A \rightarrow A$ be an associative algebra morphism. We consider the hom-associative algebra $(A, \alpha \circ \mu, \alpha)$ and the corresponding Hochschild cochain complex are given by $(C^\bullet_\alpha (A, A), \delta_\alpha)$. We have seen that these cochain groups have a cup product and a degree $-1$ graded Lie bracket which makes the corresponding cohomology $H^\bullet_\alpha (A, A)$ a Gerstenhaber algebra.

On the other hand, we may also consider the classical Hochschild complex $(C^\bullet (A, A), \delta)$ of the associative algebra $A$. Since $\alpha : A \rightarrow A$ is an associative algebra morphism, the Hochschild coboundary $\delta$ restricts to $C^\bullet_\alpha (A, A)$. In other words, $(C^\bullet_\alpha (A, A), \delta)$ is a subcomplex of the Hochschild complex  $(C^\bullet (A, A), \delta)$. Moreover, the cup product and the degree $-1$ graded Lie bracket on the Hochschild cochains $C^\bullet (A, A)$ restricts to $C^\bullet_\alpha (A, A)$. Hence, we get a Gerstenhaber subalgebra of $H^\bullet (A, A)$. However, this Gerstenhaber subalgebra is different than the Gerstenhaber algebra $H^\bullet_\alpha (A, A)$ induced from the hom-associative algebra $(A, \alpha \circ \mu, \alpha)$.
\end{remark}

\medskip

\noindent {\bf Acknowledgements.} The author would like to thank the referee for his/her valuable comments on the earlier version of the manuscript. The research has been carried out when the author was a research fellow at Indian Statistical Institute, Kolkata (India). He wishes to thank the Institute for their support.


\begin{thebibliography}{BFGM03}

\bibitem{amm-ej-makh}
F. Ammar, Z. Ejbehi and A. Makhlouf, Cohomology and Deformations of Hom-algebras, {\em J. Lie Theory} 21 (2011), no. 4, 813-836. 

\bibitem{arfa-fraj-makh}
A. Arfa, B. Fraj and A. Makhlouf, Morphisms cohomology and deformations of Hom-algebras, {\em J. Nonlinear Math. Phys.} 25 (2018), no. 4, 589-603.



\bibitem{das1} A. Das, Homotopy $G$-algebra structure on the cochain complex of hom-type algebras, {\em C. R. Acad. Sci. Paris} 356 (2018), no. 11-12, 1090-1099.

\bibitem{gers}
M. Gerstenhaber, The cohomology structure of an associative ring, {\em Ann. of Math.} 78 (1963) 267-288.

\bibitem{hls}
J. T. Hartwig, D. Larsson and S. D. Silvestrov, Deformations of Lie algebras
using $\sigma$-derivations, {\em J. Algebra} 295 (2006) 314-361.

\bibitem{hss}
M. Hassanzadeh, I. Shapiro and S. S\"utl\"u,
Cyclic homology for Hom-associative algebras,
{\em J. Geom. Phys.} 98 (2015) 40-56. 

\bibitem{makh-sil}
A. Makhlouf, S. Silvestrov,
Hom-algebra structures,
{\em J. Gen. Lie Theory Appl.} 2 (2008), no. 2, 51-64. 

\bibitem{makh-sil2}
A. Makhlouf, S. Silvestrov, Notes on 1-parameter formal deformations of Hom-associative and Hom-Lie algebras, {\em Forum Math.} 22 (2010), no. 4, 715-739.

\bibitem{trad}
T. Tradler, The Batalin-Vilkovisky algebra on Hochschild cohomology induced by infinity inner products,
{\em Ann. Inst. Fourier (Grenoble)} 58 (2008), no. 7, 2351-2379.

\end{thebibliography}
\end{document}